\documentclass{article}

\usepackage{amsfonts,amsmath, amsthm, amssymb}
\usepackage[latin1]{inputenc}
\usepackage[english]{babel}
\usepackage{graphicx}
\usepackage{enumitem}
\usepackage{calc}

\pagestyle{plain}

\newcommand{\NN}{\mathbb{N}}
\newcommand{\RR}{\mathbb{R}}

\newcommand{\D}{\mathcal{D}}

\newcommand{\HH}{\mathcal{H}}

\newcommand{\sd}{\rm sd}
\newcommand{\dist}{\rm d}

\newcommand{\len}{\rm len}
\newcommand{\mult}{\rm m}
\newcommand{\odd}{\rm odd}
\newcommand{\even}{\rm even}

\newcommand{\sub}{\subseteq}

\newcommand{\comment}[1]{}

\newtheorem{theorem}{Theorem}[section]
\newtheorem{lemma}[theorem]{Lemma}
\newtheorem{corollary}[theorem]{Corollary}
\newtheorem{proposition}[theorem]{Proposition}

\theoremstyle{definition}
\newtheorem*{question}{Problem}
\newtheorem*{conjecture}{Conjecture}

\def\?#1{\vadjust{\vbox to 0pt{\vss\vskip-8pt\leftline{%
     \llap{\hbox{\vbox{\pretolerance=-1
     \doublehyphendemerits=0\finalhyphendemerits=0
     \hsize16truemm\tolerance=10000\small
     \lineskip=0pt\lineskiplimit=0pt
     \rightskip=0pt plus16truemm\baselineskip8pt\noindent
     \hskip0pt        
     #1\endgraf}\hskip7truemm}}}\vss}}}
     \date{}

     \title{Steiner trees and higher geodecity}
     \author{Daniel Wei\ss{}auer}
     
     \begin{document}
		\maketitle
		
		\begin{abstract}
		Let~$G$ be a connected graph and $\ell : E(G) \to \RR^+$ a length-function on the edges of~$G$. The \emph{Steiner distance} $\sd_G(A)$ of $A \sub V(G)$ within~$G$ is the minimum length of a connected subgraph of~$G$ containing~$A$, where the length of a subgraph is the sum of the lengths of its edges. 
		
		It is clear that every subgraph $H \sub G$, with the induced length-function $\ell|_{E(H)}$, satisfies $\sd_H(A) \geq \sd_G(A)$ for every $A \sub V(H)$. We call $H \sub G$ \emph{$k$-geodesic in~$G$} if equality is attained for every $A \sub V(H)$ with $|A| \leq k$. A subgraph is \emph{fully geodesic} if it is $k$-geodesic for every $k \in \NN$. It is easy to construct examples of graphs $H \sub G$ such that~$H$ is $k$-geodesic, but not $(k+1)$-geodesic, so this defines a strict hierarchy of properties. We are interested in situations in which this hierarchy collapses in the sense that if $H \sub G$ is $k$-geodesic, then~$H$ is already fully geodesic in~$G$.
		
		Our first result of this kind asserts that if~$T$ is a tree and $T \sub G$ is 2-geodesic with respect to some length-function~$\ell$, then it is fully geodesic. This fails for graphs containing a cycle. We then prove that if~$C$ is a cycle and $C \sub G$ is 6-geodesic, then~$C$ is fully geodesic. We present an example showing that the number~6 is indeed optimal.
		
		We then develop a structural approach towards a more general theory and present several open questions concerning the big picture underlying this phenomenon.
	\end{abstract}     
     
     \begin{section}{Introduction}
     	\label{intro}
     	
     	Let~$G$ be a graph and $\ell : E(G) \to \RR^+$ a function that assigns to every edge $e \in E(G)$ a positive \emph{length} $\ell(e)$. This naturally extends to subgraphs $H \sub G$ as $\ell(H) := \sum_{e \in E(H)} \ell(e)$. The \emph{Steiner distance} $\sd_G(A)$ of a set $A \sub V(G)$ is defined as the minimum length of a connected subgraph of~$G$ containing~$A$, where $\sd_G(A) := \infty$ if no such subgraph exists. Every such minimizer is necessarily a tree and we say it is a \emph{Steiner tree for~$A$ in~$G$}. In the case where $A = \{ x, y\}$, the Steiner distance of~$A$ is the ordinary distance~$\dist_G(x,y)$ between~$x$ and~$y$. Hence this definition yields a natural extension of the notion of ``distance'' for sets of more than two vertices. Corresponding notions of radius, diameter and convexity have been studied in the literature \cite{chartrand, steinerdiamdeg, steinerdiamgirth, steinerdiamplanar, steinerconvexnhood, steinerconvexgeom}. Here, we initiate the study of \emph{Steiner geodecity}, with a focus on structural assumptions that cause a collapse in the naturally arising hierarchy.

     	Let $H \sub G$ be a subgraph of~$G$, equipped with the length-function $\ell|_{E(H)}$. It is clear that for every $A \sub V(H)$ we have $\sd_H(A) \geq \sd_G(A)$. For a natural number~$k$, we say that~$H$ is \emph{$k$-geodesic in~$G$} if $\sd_H(A) = \sd_G(A)$ for every $A \sub V(H)$ with $|A| \leq k$. We call~$H$ \emph{fully geodesic in~$G$} if it is $k$-geodesic for every $k \in \mathbb{N}$.
     	
     	By definition, a $k$-geodesic subgraph is $m$-geodesic for every $m \leq k$. In general, this hierarchy is strict: In Section~\ref{general theory} we provide, for every $k \in \NN$, examples of graphs $H \sub G$ and a length-function $\ell : E(G) \to \RR^+$ such that~$H$ is $k$-geodesic, but not $(k+1)$-geodesic. On the other hand, it is easy to see that if $H \sub G$ is a 2-geodesic \emph{path}, then it is necessarily fully geodesic, because the Steiner distance of any $A \sub V(H)$ in~$H$ is equal to the maximum distance between two $a, b \in A$. Our first result extends this to all trees.
     	
     	\begin{theorem} \label{tree 2-geo}
			Let~$G$ be a graph with length-function~$\ell$ and $T \sub G$ a tree. If~$T$ is 2-geodesic in~$G$, then it is fully geodesic.     	     	
     	\end{theorem}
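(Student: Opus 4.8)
The plan is to reduce the statement to a comparison between two quantities that depend only on the restriction of the two path-metrics to $A$, and to exploit that ``$T$ is $2$-geodesic'' means precisely that $\dist_T$ and $\dist_G$ agree on $V(T)$. Fix $A \sub V(T)$; we must show $\sd_G(A) = \sd_T(A)$, and since $T \sub G$ gives $\sd_G(A) \le \sd_T(A)$ for free, only the reverse inequality is at stake. As $T$ is a tree, $\sd_T(A) = \ell(T_A)$, where $T_A$ denotes the unique minimal subtree of $T$ spanning $A$.

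The central device is the length of a closed tour through $A$. First I would record the tree-side estimate: for every cyclic ordering $a_1, \dots, a_m$ of $A$ one has $\sum_{i} \dist_T(a_i, a_{i+1}) \ge 2\ell(T_A)$, indices taken cyclically. This follows from an edge-counting argument. Each edge $e \in E(T_A)$ splits $T_A$, and hence $A$, into two nonempty parts (otherwise $e$ would be redundant, contradicting minimality of $T_A$); since every $\dist_T(a_i,a_{i+1})$ is realised by a path inside the connected subtree $T_A$, the contribution of $e$ to the sum equals $\ell(e)$ times the number of consecutive pairs lying on opposite sides of $e$, and this number of sign changes around the cycle is even and positive, hence at least $2$. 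Summing over $e \in E(T_A)$ gives the bound.

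Next I would produce, on the $G$-side, a single good tour. Taking a Steiner tree $S$ for $A$ in $G$ and running the standard doubling argument — traverse an Euler tour of the multigraph obtained by duplicating every edge of $S$, then shortcut to the vertices of $A$ using the triangle inequality for $\dist_G$ — yields a cyclic ordering $a_1, \dots, a_m$ of $A$ with $\sum_i \dist_G(a_i, a_{i+1}) \le 2\ell(S) = 2\sd_G(A)$. Because $T$ is $2$-geodesic, $\dist_G(a_i,a_{i+1}) = \dist_T(a_i,a_{i+1})$ for every pair, so this very ordering also satisfies $\sum_i \dist_T(a_i,a_{i+1}) \le 2\sd_G(A)$. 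Combining with the tree-side lower bound applied to the same ordering yields $2\ell(T_A) \le 2\sd_G(A)$, that is $\sd_T(A) = \ell(T_A) \le \sd_G(A)$, which is exactly the missing inequality.

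The individual steps are routine; the one carrying the real content is the tree-side lower bound, together with the observation that it holds for \emph{every} tour while the doubling argument supplies only an upper bound for \emph{one} cleverly chosen tour. I expect the main obstacle to be the bookkeeping: checking that minimality of $T_A$ forces every edge to separate $A$, that shortest paths between elements of $A$ stay inside $T_A$ so that only edges of $T_A$ enter the count, and that the Euler-tour shortcutting never increases $\dist_G$-length. It is worth noting that the argument uses the tree structure of $T$ decisively, through the identity ``optimal tour $=$ twice the minimal spanning subtree'', which is false once a cycle is present — consistent with the failure of the analogous statement for graphs with a cycle announced in the introduction.
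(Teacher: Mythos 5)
Your proof is correct, and it takes a genuinely more direct route than the paper's. The counting core is the same in both arguments: a cyclic tour through the terminal set~$A$ is costed twice, once in the tree, where every edge of the spanning subtree~$T_A$ is crossed a positive even number of times, giving the lower bound $2\,\ell(T_A)$ (this is exactly the paper's Lemma~\ref{pos even} together with the inequality half of Lemma~\ref{equality achieved}), and once in~$G$, where the Euler tour of the doubled Steiner tree gives the upper bound $2\,\sd_G(A)$, with 2-geodecity transferring the pairwise distances from one side to the other. The difference is the framing. The paper argues by contradiction: it first invokes the shortcut-tree lemma (Lemma~\ref{shortcut tree}), whose proof needs a careful minimality argument, to normalize a witness of failure into a tree~$R$ all of whose vertices in $V(T)$ are leaves; this normalization is needed because the paper then applies the \emph{equality} part of Lemma~\ref{equality achieved} to~$R$, via a walk traced by a cycle on exactly $L(R)$. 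You avoid both the contradiction and the normalization: by running the doubling argument on an arbitrary Steiner tree~$S$ for~$A$ in~$G$ and keeping only the shortcutting \emph{inequality} $\sum_i \dist_G(a_i,a_{i+1}) \le 2\,\ell(S)$, you never need the tour to pass through the leaves of~$S$ exactly, so Lemma~\ref{shortcut tree} is not needed at all, and the conclusion $2\,\sd_T(A) \le \sum_i \dist_T(a_i,a_{i+1}) = \sum_i \dist_G(a_i,a_{i+1}) \le 2\,\sd_G(A)$ holds for every $A$ directly. Your argument is therefore shorter and self-contained for this particular theorem; the paper's detour through shortcut trees is overkill here but pays off later, where the minimality property~\ref{sct minim} becomes essential for the cycle case (Theorem~\ref{cycle 6-geo}) and for the general theory of Section~\ref{general theory}.
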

     	
     Here, it really is necessary for the subgraph to be acyclic (see Corollary~\ref{H_2 forests}). Hence the natural follow-up question is what happens in the case where the subgraph is a cycle.
     	
     	\begin{theorem} \label{cycle 6-geo}
     		Let~$G$ be a graph with length-function~$\ell$ and $C \sub G$ a cycle. If~$C$ is 6-geodesic in~$G$, then it is fully geodesic.
     	\end{theorem}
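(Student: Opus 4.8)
The plan is to prove the equivalent statement that $\sd_G(A) = \sd_C(A)$ for every $A \sub V(C)$, by first reducing $\sd_C$ to a gap formula and then running a minimal-counterexample argument that feeds on the $6$-geodecity hypothesis. Write $L := \ell(C)$ and fix $A = \{a_1, \dots, a_m\}$ in cyclic order, with $g_1, \dots, g_m$ the arcs of $C$ between cyclically consecutive terminals (the \emph{gaps}). Since the connected subgraphs of a cycle are exactly its arcs together with $C$ itself, the cheapest connected subgraph of $C$ meeting $A$ is obtained by deleting a longest gap, so $\sd_C(A) = L - \max_i \ell(g_i)$. As $C$ itself contains such an arc, we always have $\sd_G(A) \le \sd_C(A)$; the entire content is the reverse inequality $\sd_G(A) \ge \sd_C(A)$.

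First I would dispose of the regime where some gap is long. If $\max_i \ell(g_i) \ge L/2$, then all of $A$ lies on an arc $P$ of length at most $L/2$. For any two vertices on such an arc the $P$-distance is at most $L/2$ and is therefore the shorter of the two $C$-arcs between them, so $P$ is $2$-geodesic in $G$; Theorem~\ref{tree 2-geo} then makes $P$ fully geodesic and gives $\sd_G(A) = \sd_P(A) = \sd_C(A)$. This is exactly why the acyclic case must be settled first. It remains to treat sets $A$ all of whose gaps are shorter than $L/2$; for these, $\dist_G(a_i, a_{i+1}) = \dist_C(a_i, a_{i+1}) = \ell(g_i)$ by $2$-geodecity, and $\sum_i \ell(g_i) = L$.

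Now suppose the theorem fails and choose $A$ with $|A| =: m$ minimal subject to $\sd_G(A) < \sd_C(A)$; by the previous paragraph all gaps are shorter than $L/2$, and by $6$-geodecity $m \ge 7$. Fix a minimum Steiner tree $S$ for $A$ in $G$ and let $g_{j^{*}}$ be a longest gap, so $\sd_C(A) = L - \ell(g_{j^{*}})$. The main engine is deletion: removing a terminal $a_i$ with $i \notin \{j^{*}, j^{*}+1\}$ merges the gaps $g_{i-1}$ and $g_i$, and if $\ell(g_{i-1}) + \ell(g_i) \le \ell(g_{j^{*}})$ the longest gap is unchanged, so $\sd_C(A \setminus \{a_i\}) = L - \ell(g_{j^{*}}) = \sd_C(A)$, while $\sd_G(A \setminus \{a_i\}) \le \ell(S) < \sd_C(A)$. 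Thus $A \setminus \{a_i\}$ is a strictly smaller counterexample, contradicting minimality. Hence no such deletion is possible: for every $i \notin \{j^{*}, j^{*}+1\}$ we must have $\ell(g_{i-1}) + \ell(g_i) > \ell(g_{j^{*}})$. In other words, the minimal counterexample is forced into a \emph{balanced} regime in which no two consecutive gaps are jointly short.

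The hard part is to derive a contradiction in this balanced regime, and this is where both the structure of $S$ and the precise threshold $6$ enter. No hypothesis-free inequality can suffice, since $\sd_G(A)$ genuinely can drop below the longest-gap value in arbitrary graphs — this is exactly what makes the hierarchy strict; indeed the one elementary global identity available, namely that summing $\dist_S$ over cyclically consecutive terminals counts every edge of $S$ at least twice, so $\sum_i \dist_S(a_i,a_{i+1}) \ge 2\ell(S)$, only bounds $\ell(S)$ from above and is useless for the lower bound we need. I would therefore analyse $S$ locally near the longest gap. Applying the deletion bound to the endpoints $a_{j^{*}}, a_{j^{*}+1}$, whose removal \emph{does} lengthen the longest gap, together with the fact that the leaves of $S$ are terminals, one extracts local structure: $S$ must carry branch or terminal vertices lying strictly closer to $a_{j^{*}}$ (respectively $a_{j^{*}+1}$) than their cyclic neighbours are. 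The plan is to combine this with balancedness to isolate a bounded window of consecutive terminals whose induced portion of $S$ already connects them more cheaply than their longest-gap arc in $C$, contradicting $6$-geodecity. The crux — and the step I expect to be the main obstacle — is to show that such a witnessing window can always be taken to contain at most six terminals, controlling the possibly numerous Steiner (branch) vertices of $S$; it is precisely here that the number $6$ is pinned down, with the extremal example showing it cannot be lowered. One technical step I would try to establish first, to make the local analysis tractable, is that a minimum Steiner tree may be chosen \emph{non-crossing} with respect to the cyclic order, so that each of its edges separates the terminals into two arcs.
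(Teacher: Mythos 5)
Your preliminary reductions are sound: the formula $\sd_C(A) = \ell(C) - \max_i \ell(g_i)$, the elimination of the long-gap case via Theorem~\ref{tree 2-geo} (a nice observation, and genuinely different from the paper, which instead rules out long edges inside the counterexample, Lemma~\ref{consec cycle long}), and the balancedness condition $\ell(g_{i-1}) + \ell(g_i) > \ell(g_{j^*})$ obtained by deleting a terminal from a cardinality-minimal counterexample. All of this is correct. But from that point on the proposal is a statement of intent, not a proof, and what is missing is not a technicality --- it is the entire content of the theorem. You never construct the claimed ``window'' of at most six terminals that $S$ connects more cheaply than $C$ does, you give no mechanism that would produce one, and nothing in your outline explains where the number~6 would come from; you yourself flag this step as ``the main obstacle''. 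What you have actually proved is only that a minimal counterexample has $m \geq 7$ terminals, all gaps shorter than $\ell(C)/2$, and balanced consecutive gaps; nothing shown so far is inconsistent with such a configuration, so no contradiction has been reached.

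For comparison, the paper earns this step with a full structural classification. It chooses the counterexample minimizing $\sd_G(A)$ rather than $|A|$ (Lemma~\ref{shortcut tree}); this stronger extremal choice is what makes the witnessing tree $T$ meet $C$ only in its leaves and share no edges with it (properties \ref{sct vert} and \ref{sct edge}), which your cardinality-minimal $S$ need not satisfy. In particular, your hope that $S$ ``may be chosen non-crossing'' is not a matter of choice: in the paper the corresponding fact --- that $T \cup C$ is planar and 3-regular --- is a consequence of \ref{sct shorter} and \ref{sct minim}, proved in Lemmas~\ref{cover disjoint trees} and~\ref{planar 3reg} via Lemma~\ref{euler planar}. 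Then Lemmas~\ref{good root} and~\ref{jumps} locate a node of $T$ whose three branches each carry at most two leaves --- this is exactly where $6 = 3 \cdot 2$ is pinned down --- and the final contradiction comes from comparing the walks traced by one particular six-vertex cycle $Q$ on the leaves in $T$ and in $C$; your correct remark that summing distances over all cyclically consecutive terminals is useless is precisely why this cleverer cycle $Q$ is needed. The upshot is Theorem~\ref{shortcut tree cycle strong}: $T \cup C$ is a subdivision of one of the five graphs of Figure~\ref{five shortcut trees}, so $T$ has at most six leaves. An argument of this structural depth is unavoidable here, and it is the part your proposal leaves open.
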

     	
     	Note that the number~6 cannot be replaced by any smaller integer. 
     	
	In Section~\ref{preliminaries} we introduce notation and terminology needed in the rest of the paper. Section~\ref{toolbox} contains observations and lemmas that will be used later. We then prove Theorem~\ref{tree 2-geo} in Section~\ref{sct on trees}. In Section~\ref{sct on cycles} we prove Theorem~\ref{cycle 6-geo} and provide an example showing that the number~6 is optimal. Section~\ref{general theory} contains an approach towards a general theory, aiming at a deeper understanding of the phenomenon displayed in Theorem~\ref{tree 2-geo} and Theorem~\ref{cycle 6-geo}. Finally, we take the opportunity to present the short and easy proof that in any graph~$G$ with length-function~$\ell$, the cycle space of~$G$ is generated by the set of fully geodesic cycles.
     	
	\end{section}     	
	
	\begin{section}{Preliminaries}
	\label{preliminaries}
		
		All graphs considered here are finite and undirected. It is convenient for us to allow parallel edges. In particular, a cycle may consist of just two vertices joined by two parallel edges. Loops are redundant for our purposes and we exclude them to avoid trivialities. Most of our notation and terminology follows that of~\cite{diestelbook}, unless stated otherwise.
		
		A set~$A$ of vertices in a graph~$G$ is called \emph{connected} if and only if $G[A]$ is.
		
		Let $G, H$ be two graphs. A \emph{model of~$G$ in~$H$} is a family of disjoint connected \emph{branch-sets} $B_v \sub V(H)$, $v \in V(G)$, together with an injective map $\beta : E(G) \to E(H)$, where we require that for any $e \in E(G)$ with endpoints $u, v \in V(G)$, the edge $\beta(e) \in E(H)$ joins vertices from~$B_u$ and~$B_v$. We say that~$G$ \emph{is a minor of~$H$} if~$H$ contains a model of~$G$.
	
		We use additive notation for adding or deleting vertices and edges. Specifically, let~$G$ be a graph, $H$ a subgraph of~$G$, $v \in V(G)$ and $e =xy \in E(G)$. Then $H + v$ is the graph with vertex-set $V(H) \cup \{ v \}$ and edge-set $E(H) \cup \{ vw \in E(G) \colon w \in V(H) \}$. Similarly, $H + e$ is the graph with vertex-set $V(H) \cup \{ x, y\}$ and edge-set $E(H) \cup \{ e \}$.
%
		
		Let~$G$ be a graph with length-function~$\ell$. A \emph{walk} in~$G$ is an alternating sequence $W = v_1e_1v_2 \ldots e_kv_{k+1}$ of vertices~$v_i$ and edges~$e_i$ such that $e_i = v_i v_{i+1}$ for every $1 \leq i \leq k$. The walk~$W$ is \emph{closed} if $v_1 = v_{k+1}$. Stretching our terminology slightly, we define the \emph{length} of the walk as $\len_G(W) := \sum_{1 \leq i \leq k} \ell(e_i)$. The \emph{multiplicity}~$\mult_W(e)$ of an edge $e \in E(G)$ is the number of times it is traversed by~$W$, that is, the number of indices $1 \leq j \leq k$ with $e = e_j$. It is clear that
		\begin{equation} \label{length walk}
			\len_G(W) = \sum_{e \in E(G)} \mult_W(e) \ell(e) .
		\end{equation}
		
		Let~$G$ be a graph and~$C$ a cycle with $V(C) \sub V(G)$. We say that a walk~$W$ in~$G$ is \emph{traced by~$C$} in~$G$ if it can be obtained from~$C$ by choosing a starting vertex $x \in V(C)$ and an orientation~$\overrightarrow{C}$ of~$C$ and replacing every $\overrightarrow{ab} \in E(\overrightarrow{C})$ by a shortest path from~$a$ to~$b$ in~$G$. A cycle may trace several walks, but they all have the same length: Every walk~$W$ traced by~$C$ satisfies
		\begin{equation} \label{length traced walk}			
			\len_G(W) = \sum_{ab \in E(C)} \dist_G(a, b) .		
		\end{equation}
		 Even more can be said if the graph~$G$ is a tree. Then all the shortest $a$-$b$-paths for $ab \in E(C)$ are unique and all walks traced by~$C$ differ only in their starting vertex and/or orientation. In particular, every walk~$W$ traced by~$C$ in a tree~$T$ satisfies
		\begin{equation}		\label{multiplicities traced walk tree}
		\forall e \in E(T): \, \,	\mult_W(e) = | \{ ab \in E(C) \colon e \in aTb \} | ,
		\end{equation}
	where $aTb$ denotes the unique $a$-$b$-path in~$T$.		
		
%
%
		
		Let~$T$ be a tree and $X \sub V(T)$. Let $e \in E(T)$ and let $T_1^e, T_2^e$ be the two components of $T -e$. In this manner, $e$ induces a bipartition $X = X_1^e \cup X_2^e$ of~$X$, given by $X_i^e = V(T_i^e) \cap X$ for $i \in \{ 1, 2 \}$. We say that the bipartition is \emph{non-trivial} if neither of $X_1^e, X_2^e$ is empty. The set of leaves of~$T$ is denoted by~$L(T)$. If $L(T) \sub X$, then every bipartition of~$X$ induced by an edge of~$T$ is non-trivial.
		
		Let~$G$ be a graph with length-function~$\ell$, $A \sub V(G)$ and~$T$ a Steiner tree for~$A$ in~$G$. Since $\ell(e) >0 $ for every $e \in E(G)$, every leaf~$x$ of~$T$ must lie in~$A$, for otherwise $T - x$ would be a tree of smaller length containing~$A$.
		
		In general, Steiner trees need not be unique. If~$G$ is a tree, however, then every $A \sub V(G)$ has a unique Steiner tree given by $\bigcup_{a, b \in A} aTb$.
	\end{section}
	
	\begin{section}{The toolbox}
	\label{toolbox}
		The first step in all our proofs is a simple lemma that guarantees the existence of a particularly well-behaved substructure that witnesses the failure of a subgraph to be $k$-geodesic.
		
		
		Let~$H$ be a graph, $T$ a tree and~$\ell$ a length-function on~$T \cup H$. We call~$T$ a \emph{shortcut tree for~$H$} if the following hold:
			\begin{enumerate}[ itemindent=0.8cm, label=(SCT\,\arabic*)]
				\item $V(T) \cap V(H) = L(T)$, \label{sct vert}
				\item $E(T) \cap E(H) = \emptyset$, \label{sct edge}
				\item $\ell(T) < \sd_H( L(T) )$, \label{sct shorter}
				\item For every $B \subsetneq L(T)$ we have $\sd_H(B) \leq \sd_T(B)$. \label{sct minim}
			\end{enumerate}
		Note that, by definition, $H$ is not $|L(T)|$-geodesic in~$T \cup H$.
%
				
					\begin{lemma} \label{shortcut tree}
				Let~$G$ be a graph with length-function~$\ell$, $k$ a natural number and $H \sub G$. If~$H$ is not $k$-geodesic in~$G$, then~$G$ contains a shortcut tree for~$H$ with at most~$k$ leaves.
			\end{lemma}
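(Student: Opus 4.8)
The plan is to produce the shortcut tree in three stages: first isolate a cheap witnessing tree $S$ whose intersection with $H$ is tightly controlled, then carve a candidate satisfying SCT1--SCT3 out of $S$ with few leaves, and finally pass to a leaf-minimal candidate to get SCT4 for free.

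First I would set up the witness. Since $H$ is not $k$-geodesic, there is a set $A \sub V(H)$ with $|A| \le k$ and $\sd_G(A) < \sd_H(A)$; as $\sd_G(A)=0=\sd_H(A)$ whenever $|A|\le 1$, necessarily $|A| \ge 2$ and $k\ge 2$. Among all such witnesses I would pick one minimizing $\sd_G(A)$ and fix a Steiner tree $S$ for $A$ in $G$, so that $\ell(S) = \sd_G(A) < \sd_H(A)$ and $L(S) \sub A$. The key reduction is to show $V(S) \cap V(H) = A$. Suppose some $w \in V(S)\cap V(H)$ lay outside $A$; then $w$ is internal, and $S-w$ has components $C_1,\dots,C_d$ with $d=\deg_S(w)\ge 2$. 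Each $A_i := A\cap V(C_i)$ is nonempty (each component of $S-w$ carries a leaf of $S$), so they partition $A$ and $|A_i|\le |A|-1$. Gluing Steiner trees in $H$ at $w$ gives $\sd_H(A)\le \sum_i \sd_H(A_i\cup\{w\})$, while $\ell(S)=\sum_i\ell(S_i)$ for the pieces $S_i$ of $S$ on $C_i+w$; comparing with $\ell(S)<\sd_H(A)$ forces some $i$ with $\ell(S_i)<\sd_H(A_i\cup\{w\})$. Then $A_i\cup\{w\}$ is again a witness of size $\le k$, and since every $S_j$ has positive length, $\sd_G(A_i\cup\{w\})\le \ell(S_i)<\ell(S)=\sd_G(A)$, contradicting minimality. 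Hence $S$ meets $H$ exactly in $A$.

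Next I would decompose $S$ along $U:=V(S)\cap V(H)=A$. The pieces are the bare edges of $S$ with both ends in $A$, and, for each component $F$ of $S-A$, the subtree $\hat F$ obtained by re-attaching the (at most one, as $S$ is a tree) $A$-vertex adjacent to $F$ at each point of contact. The interior $V(F)$ avoids $V(H)$, so each $\hat F$ satisfies SCT1 and SCT2 with $L(\hat F)\sub A$. The pieces are edge-disjoint and cover $E(S)$, so $\ell(S)=\sum(\text{piece length})$; moreover, gluing the $H$-Steiner trees of the leaf sets of the pieces along their shared $A$-vertices yields a connected subgraph of $H$ containing $A$, whence $\sd_H(A)\le \sum(\text{$H$-cost of piece})$, where the $H$-cost is $\sd_H(L(\hat F))$ for $\hat F$ and $\dist_H(u,u')$ for a bare edge $uu'$. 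Since $\ell(S)<\sd_H(A)$, some piece is strictly cheaper than its $H$-cost: either a bare edge $uu'$ with $\ell(uu')<\dist_H(u,u')$ (so $uu'\notin E(H)$, giving a two-leaf candidate) or a subtree $\hat F$ with $\ell(\hat F)<\sd_H(L(\hat F))$. In either case we obtain a tree satisfying SCT1--SCT3 with at most $|A|\le k$ leaves.

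Finally, among all trees $T\sub G$ satisfying SCT1--SCT3 I would take one, $T_0$, with the fewest leaves; the previous step gives $|L(T_0)|\le k$. To verify SCT4, suppose some $B\subsetneq L(T_0)$ had $\sd_H(B)>\sd_{T_0}(B)$, and let $T_B$ be the unique Steiner tree for $B$ in the tree $T_0$. Because every vertex of $B\sub L(T_0)$ is a leaf of $T_0$ it is also a leaf of $T_B$, so $L(T_B)=B$ and $V(T_B)\cap V(H)=B$; thus $T_B$ satisfies SCT1 and SCT2, and from $\ell(T_B)=\sd_{T_0}(B)<\sd_H(B)=\sd_H(L(T_B))$ also SCT3, yet it has fewer leaves than $T_0$ --- a contradiction. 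Hence $T_0$ is the desired shortcut tree. I expect the technical heart to be the second paragraph: forcing $S$ to meet $H$ only in $A$ (so that the leaf count stays $\le k$) and making the gluing inequality $\sd_H(A)\le\sum(\text{$H$-cost})$ precise, in particular the connectivity of the glued $H$-subgraph; once these are in place, the leaf-minimization that yields SCT4 is essentially automatic.
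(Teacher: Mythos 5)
Your proof is correct, and it shares the paper's opening move --- among all witnesses $A \sub V(H)$ with $|A| \le k$ and $\sd_G(A) < \sd_H(A)$, take one minimizing $\sd_G(A)$ and fix a Steiner tree for it --- but after that the architectures diverge. The paper proves that this Steiner tree $T$ is \emph{itself} a shortcut tree. The splitting-and-gluing argument that you run only at vertices of $(V(S) \cap V(H)) \setminus A$ is applied there to \emph{every} vertex of $V(T) \cap V(H)$ of degree at least two, including vertices of $A$; your argument works verbatim in that case as well, since each component of $S - w$ contains a leaf of $S$ lying in $A \setminus \{ w \}$, so each $A_i \cup \{ w \}$ is still a proper subset of $A$ of size at most $k$. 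That stronger form of your first claim gives $V(T) \cap V(H) = L(T) = A$ at once, and it makes your entire second stage --- the decomposition into bare edges and subtrees $\hat F$ and the averaging over $H$-costs --- unnecessary. The treatment of \ref{sct minim} also differs: the paper derives it from the same minimality of $\sd_G(A)$ (for $B \subsetneq L(T) = A$, the subtree $T - (A \setminus B)$ shows $\sd_G(B) < \sd_G(A)$, so $B$ cannot be a witness, whence $\sd_H(B) = \sd_G(B) \le \sd_T(B)$), whereas you obtain it from a second extremal argument, passing to a leaf-minimal tree among those satisfying \ref{sct vert}--\ref{sct shorter} and extracting the Steiner subtree of a violating set $B$. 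Both routes are sound. Yours isolates a reusable observation --- \ref{sct vert}--\ref{sct shorter} together with leaf-minimality already imply \ref{sct minim}, an extraction similar in spirit to the paper's later proof of Proposition~\ref{H_k sct} --- at the price of the extra decomposition stage, whose gluing inequality you rightly flag as the point needing care (and where your parenthetical about points of contact should say that each $A$-vertex sends at most one edge to a component $F$ of $S - A$, while a vertex of $F$ may well have several neighbors in $A$). The paper's route buys brevity and an explicit description of the shortcut tree: it is simply a minimum-length Steiner tree of a witness chosen to minimize $\sd_G$.
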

			
%
			
			\begin{proof}
	Among all $A \sub V(H)$ with $|A| \leq k$ and $\sd_G(A) < \sd_H(A)$, choose~$A$ such that $\sd_G(A)$ is minimum. Let $T \sub G$ be a Steiner tree for~$A$ in~$G$. We claim that~$T$ is a shortcut tree for~$H$.
	
	\textit{Claim 1:} $L(T) = A = V(T) \cap V(H)$.
	
	The inclusions $L(T) \sub A \sub V(T) \cap V(H)$ are clear. We show $V(T) \cap V(H) \sub L(T)$. Assume for a contradiction that $x \in V(T) \cap V(H)$ had degree $d \geq 2$ in~$T$. Let $T_1, \ldots, T_d$ be the components of $T - x$ and for $j \in [d]$ let $A_j := A \cap V(T_j) \cup \{ x \}$. Since $L(T) \sub A$, every tree~$T_i$ contains some $a \in A$ and so $A \not \sub A_j$. In particular $|A_j| \leq k$. Moreover $\sd_G(A_j) \leq \ell( T_j + x) < \ell(T)$, so by our choice of~$A$ and~$T$ it follows that $\sd_G(A_j) = \sd_H(A_j)$. Therefore, for every $j \in [d]$ there exists a connected $S_j \sub H$ with $A_j \sub V(S_j)$ and $\ell(S_j) \leq \ell(T_j + x)$. But then $S := \bigcup_j S_j \sub H$ is connected, contains~$A$ and satisfies
	\[
	\ell(S) \leq \sum_{j = 1}^d \ell(S_j) \leq \sum_{j=1}^d \ell(T_j + x) = \ell(T) ,
	\]		
	which contradicts the fact that $\sd_H(A) > \ell(T)$ by choice of~$A$ and~$T$.	
	
	\textit{Claim 2:} $E(T) \cap E(H) = \emptyset$.
	
	Assume for a contradiction that $xy \in E(T) \cap E(H)$. By Claim~1, $x, y \in L(T)$ and so~$T$ consists only of the edge~$xy$. But then $T \sub H$ and $\sd_H(A) \leq \ell(T)$, contrary to our choice of~$A$ and~$T$.
	
	\textit{Claim 3:} $\ell(T) < \sd_H(L(T))$.
	
	We have $\ell(T) = \sd_G(A) < \sd_H(A)$. By Claim~1, $A = L(T)$.
	
	\textit{Claim 4:} For every $B \subsetneq L(T)$ we have $\sd_H(B) \leq \sd_T(B)$.
	
		Let $B \subsetneq L(T)$ and let $T' := T - (A \setminus B)$. By Claim~1, $T'$ is the tree obtained from~$T$ by chopping off all leaves not in~$B$ and so 
		\[
		\sd_G(B) \leq \ell(T') < \ell(T) = \sd_G(A) .
		\]
		By minimality of~$A$, it follows that $\sd_H(B) = \sd_G(B) \leq \sd_T(B)$.
			\end{proof}
			
			Our proofs of Theorem~\ref{tree 2-geo} and Theorem~\ref{cycle 6-geo} proceed by contradiction and follow a similar outline. Let $H \sub G$ be a subgraph satisfying a certain set of assumptions. The aim is to show that~$H$ is fully geodesic. Assume for a contradiction that it was not and apply Lemma~\ref{shortcut tree} to find a shortcut tree~$T$ for~$H$. Let~$C$ be a cycle with $V(C) \sub L(T)$ and let $W_H, W_T$ be walks traced by~$C$ in~$H$ and~$T$, respectively. If $|L(T)| \geq 3$, then it follows from~(\ref{length traced walk}) and~\ref{sct minim} that $\len(W_H) \leq \len(W_T)$. 
			
			Ensure that $\mult_{W_T}(e) \leq 2$ for every $e \in E(T)$ and that $\mult_{W_H}(e) \geq 2$ for all $e \in E(S)$, where $S \sub H$ is connected with $L(T) \sub V(S)$. Then
			\[
			2\, \sd_H(L(T)) \leq 2\, \ell(S) \leq \len(W_H) \leq \len(W_T) \leq 2 \, \ell(T) ,
			\]
	which contradicts~\ref{sct shorter}.		

	The first task is thus to determine, given a tree~$T$, for which cycles~$C$ with $V(C) \sub V(T)$ we have $m_W(e) \leq 2$ for all $e \in E(T)$, where~$W$ is a walk traced by~$C$ in~$T$. Let $S \sub T$ be the Steiner tree for~$V(C)$ in~$T$. It is clear that~$W$ does not traverse any edges $e \in E(T) \setminus E(S)$ and $L(S) \sub V(C) \sub V(S)$. Hence we can always reduce to this case and may for now assume that $S = T$ and $L(T) \sub V(C)$.

	\begin{lemma} \label{pos even}
		Let~$T$ be a tree, $C$ a cycle with $L(T) \sub V(C) \sub V(T)$ and~$W$ a walk traced by~$C$ in~$T$. Then $m_W(e)$ is positive and even for every $e \in E(T)$.
	\end{lemma}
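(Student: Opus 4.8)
The plan is to combine equation~(\ref{multiplicities traced walk tree}) with the elementary fact that a cycle meets every edge cut in an even number of edges. First I would fix an edge $e \in E(T)$ and let $T_1^e, T_2^e$ be the two components of $T - e$. By~(\ref{multiplicities traced walk tree}), $\mult_W(e)$ counts the edges $ab \in E(C)$ whose $T$-path $aTb$ contains~$e$; since $T$ is a tree, $e \in aTb$ holds precisely when $a$ and $b$ lie in different components of $T - e$. Writing $V_i := V(C) \cap V(T_i^e)$ for the bipartition of $V(C)$ induced by~$e$, we therefore have $\mult_W(e) = |\{ ab \in E(C) \colon a \in V_1,\, b \in V_2 \}|$, the number of edges of~$C$ crossing the cut $(V_1, V_2)$.

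For positivity I would use the hypothesis $L(T) \sub V(C)$. As noted in Section~\ref{preliminaries}, when $L(T) \sub V(C)$ every bipartition of $V(C)$ induced by an edge of~$T$ is non-trivial, so both $V_1$ and $V_2$ are non-empty. Since~$C$ is connected and meets both sides, at least one edge of~$C$ must cross the cut, giving $\mult_W(e) \geq 1$.

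For evenness I would run the standard parity argument. Define $f : V(C) \to \{0, 1\}$ by $f(v) = 0$ for $v \in V_1$ and $f(v) = 1$ for $v \in V_2$; then $ab \in E(C)$ crosses the cut exactly when $f(a) \neq f(b)$, that is, when $f(a) + f(b)$ is odd. Summing over the edges of~$C$ and using that each vertex of~$C$ is incident with exactly two edges of~$C$,
\[
\sum_{ab \in E(C)} \bigl( f(a) + f(b) \bigr) = 2 \sum_{v \in V(C)} f(v) \equiv 0 \pmod 2 ,
\]
so the number of edges with $f(a) \neq f(b)$ is even. Hence $\mult_W(e)$ is even, and together with $\mult_W(e) \geq 1$ this yields $\mult_W(e) \geq 2$; in particular it is positive and even.

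Neither ingredient is deep: the whole computation rests on~(\ref{multiplicities traced walk tree}) and the cut interpretation of $e \in aTb$. The one point demanding care is ensuring that both sides of the cut are non-empty, which is exactly where the assumption $L(T) \sub V(C)$ enters; without it, an edge~$e$ lying in a part of~$T$ that hangs off~$V(C)$ could have $\mult_W(e) = 0$. I expect this to be the only place where the hypotheses are genuinely used.
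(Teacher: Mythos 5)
Your proof is correct and follows essentially the same route as the paper: both use~(\ref{multiplicities traced walk tree}) to identify $\mult_W(e)$ with the number of edges of~$C$ crossing the bipartition of $V(C)$ induced by~$e$, and both use $L(T) \sub V(C)$ to ensure this bipartition is non-trivial. The only difference is that the paper simply cites the elementary fact that a cycle has a positive even number of edges across any non-trivial bipartition of its vertex set, whereas you spell out the connectivity and parity arguments proving it.
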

	
	\begin{proof}
		Let $e \in E(T)$ and let $V(C) = V(C)_1 \cup V(C)_2$ be the induced bipartition. Since $L(T) \sub V(C)$, this bipartition is non-trivial. By~(\ref{multiplicities traced walk tree}), $m_W(e)$ is the number of $ab \in E(C)$ such that $e \in aTb$. By definition, $e \in aTb$ if and only if~$a$ and~$b$ lie in different sides of the bipartition. Every cycle has a positive even number of edges across any non-trivial bipartition of its vertex-set.
	\end{proof}

		\begin{lemma} \label{equality achieved}
		Let~$T$ be a tree, $C$ a cycle with $L(T) \sub V(C) \sub V(T)$. Then
		\[
		2 \ell(T) \leq \sum_{ab \in E(C)} \dist_T(a, b) .
		\]
		Moreover, there is a cycle~$C$ with $V(C) = L(T)$ for which equality holds.
		\end{lemma}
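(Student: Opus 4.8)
The inequality I would obtain directly from a traced walk. Fix a walk~$W$ traced by~$C$ in~$T$. Since $L(T) \sub V(C)$, Lemma~\ref{pos even} applies and shows that $\mult_W(e)$ is positive and even, hence $\mult_W(e) \geq 2$, for every $e \in E(T)$. Combining the two length formulas~(\ref{length traced walk}) and~(\ref{length walk}) then yields
\[
\sum_{ab \in E(C)} \dist_T(a,b) = \len_T(W) = \sum_{e \in E(T)} \mult_W(e)\, \ell(e) \geq 2 \sum_{e \in E(T)} \ell(e) = 2\,\ell(T),
\]
which is the claimed bound.

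For the ``moreover'' part I must produce a cycle~$C$ with $V(C) = L(T)$ attaining equality. The displayed computation shows that equality holds precisely when $\mult_W(e) = 2$ for every $e \in E(T)$, and by~(\ref{multiplicities traced walk tree}) this means that~$C$ should cross the bipartition of $L(T)$ induced by each edge~$e$ exactly twice. The plan is to order the leaves as $\ell_1, \ldots, \ell_m$ in the cyclic order in which they are first encountered along a depth-first traversal of~$T$ (equivalently, along an Euler tour of the multigraph obtained from~$T$ by doubling every edge) and to take~$C$ to be the cycle $\ell_1 \ell_2 \cdots \ell_m \ell_1$ on the vertex set $L(T)$; when $|L(T)| = 2$ this is the digon on the two leaves, which our conventions permit.

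The heart of the argument, and the step I expect to be the main obstacle, is to show that this cyclic order is compatible with~$T$ in the sense that for every edge $e \in E(T)$ the leaves contained in one component of $T - e$ occupy a single contiguous arc $\ell_i, \ell_{i+1}, \ldots, \ell_j$ of the order. Granting this, the other side of the bipartition is the complementary arc, and~$C$ crosses the bipartition only in the two edges $\ell_{i-1}\ell_i$ and $\ell_j\ell_{j+1}$ bounding the arc, so $\mult_W(e) = 2$ and equality holds. To establish the contiguity I would use that the leaf-sets $S_e := L(T) \cap V(T_1^e)$ cut off by the edges of~$T$ form a laminar family --- any two of them are nested or disjoint, since two subtrees of a tree are --- and that a laminar family on a finite ground set can always be realised by a cyclic order in which every member is an interval. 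Concretely I would prove this by induction on $|V(T)|$, fixing an internal vertex, taking the inductively obtained cyclic orders of the leaf-sets of its incident subtrees, and concatenating them; the depth-first traversal above is precisely the bookkeeping that performs this concatenation automatically.
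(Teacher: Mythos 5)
Your proposal is correct and takes essentially the same route as the paper: the inequality is obtained exactly as there, from Lemma~\ref{pos even} combined with~(\ref{length walk}) and~(\ref{length traced walk}), and the equality cycle is the same one the paper constructs, namely the leaves of~$T$ in the cyclic order of an Euler tour of the doubled tree~$2T$. Your contiguity/interval argument simply fills in what the paper dismisses as ``easily verified'' (one small caveat: two arbitrary subtrees of a tree need \emph{not} be nested or disjoint --- they can overlap in a vertex --- but the components of single-edge deletions do form a laminar family, and your DFS/preorder argument establishes the required interval property directly in any case).
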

		
		\begin{proof}
			Let~$W$ be a walk traced by~$C$ in~$T$. By Lemma~\ref{pos even}, (\ref{length walk}) and~(\ref{length traced walk})
			\[
			2 \ell(T) \leq \sum_{e \in E(T)} \mult_W(e) \ell(e) = \len(W) = \sum_{ab \in E(C)} \dist_T(a, b) .
			\]
	To see that equality can be attained, let~$2T$ be the multigraph obtained from~$T$ by doubling all edges. Since all degrees in~$2T$ are even, it has a Eulerian trail~$W$, which may be considered as a walk in~$T$ with $\mult_W(e) = 2$ for all $e \in E(T)$. This walk traverses the leaves of~$T$ in some cyclic order, which yields a cycle~$C$ with $V(C) = L(T)$. It is easily verified that~$W$ is traced by~$C$ in~$T$ and so
	\[
	2 \ell(T) = \sum_{e \in E(T)} \mult_W(e) \ell(e) = \len(W) = \sum_{ab \in E(C)} \dist_T(a, b) .
	\]
		\end{proof}
		
		We have now covered everything needed in the proof of Theorem~\ref{tree 2-geo}, so the curious reader may skip ahead to Section~\ref{sct on trees}.

	\begin{figure}
		\begin{center}
			\includegraphics[width=2.5cm]{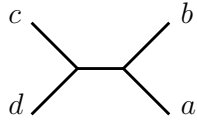}
			\label{4leaftree}
			\caption{A tree with four leaves}
		\end{center}
	\end{figure}		
		
		In general, not every cycle~$C$ with $V(C) = L(T)$ achieves equality in Lemma~\ref{equality achieved}. Consider the tree~$T$ from Figure~\ref{4leaftree} and the following three cycles on $L(T)$
		\[
		C_1 = abcda, \, \, C_2 = acdba, \, \, C_3 = acbda . 
		\]
		
		\begin{figure}
			\begin{center}
				\includegraphics[width=6cm]{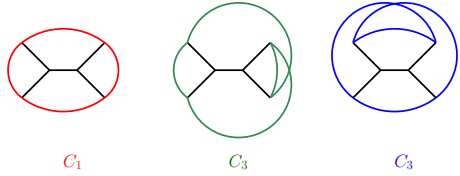}
				\label{4leaftree3cyc}
				\caption{The three cycles on~$T$}
			\end{center}
		\end{figure}
		
		For the first two, equality holds, but not for the third one. But how does~$C_3$ differ from the other two? It is easy to see that we can add~$C_1$ to the planar drawing of~$T$ depicted in Figure~\ref{4leaftree}: There exists a planar drawing of $T \cup C_1$ extending this particular drawing. This is not true for~$C_2$, but it can be salvaged by exchanging the positions of~$a$ and~$b$ in Figure~\ref{4leaftree}. Of course, this is merely tantamount to saying that $T \cup C_i$ is planar for $i \in \{ 1, 2\}$. 
		
%

	On the other hand, it is easy to see that $T \cup C_3$ is isomorphic to~$K_{3,3}$ and therefore non-planar.

		\begin{lemma} \label{euler planar}
		Let~$T$ be a tree and~$C$ a cycle with $V(C) = L(T)$. Let~$W$ be a walk traced by~$C$ in~$T$. The following are equivalent:
		\begin{enumerate}[label=(a)]
			\item $T \cup C$ is planar.
			\item For every $e \in E(T)$, both $V(C)_1^e, V(C)_2^e$ are connected in~$C$.
			\item $W$ traverses every edge of~$T$ precisely twice.
		\end{enumerate}
	\end{lemma}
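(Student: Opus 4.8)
The plan is to treat the two ``combinatorial'' conditions~(b) and~(c) together by a direct count, and then to establish the topological equivalence $(a)\Leftrightarrow(b)$ by two separate implications: the forward direction by exhibiting a $K_{3,3}$-minor whenever~(b) fails, and the (harder) converse by building an explicit planar embedding of $T\cup C$ out of~(b). Together these yield $(a)\Leftrightarrow(b)\Leftrightarrow(c)$.

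For $(b)\Leftrightarrow(c)$ I would fix $e\in E(T)$. By~(\ref{multiplicities traced walk tree}), $\mult_W(e)$ equals the number of edges $ab\in E(C)$ whose ends lie on opposite sides of the bipartition $V(C)=V(C)_1^e\cup V(C)_2^e$. Reading this bipartition as a $2$-colouring of the cyclic vertex-sequence of~$C$, $\mult_W(e)$ is exactly the number of colour changes encountered in one traversal of~$C$. Since $L(T)\sub V(C)$ makes the bipartition non-trivial, Lemma~\ref{pos even} gives $\mult_W(e)\ge 2$, and the number of colour changes equals~$2$ precisely when each colour class occupies a single arc, i.e.\ when both $V(C)_1^e$ and $V(C)_2^e$ are connected in~$C$. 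Hence $\mult_W(e)=2$ for all~$e$ iff~(b) holds.

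For $(a)\Rightarrow(b)$ I argue contrapositively: suppose~(b) fails at some edge $e=xy$, with $x\in T_1^e$ and $y\in T_2^e$. By the count above at least four edges of~$C$ cross the induced bipartition, so there are leaves $w_1,w_3\in V(C)_1^e$ and $w_2,w_4\in V(C)_2^e$ occurring in the cyclic order $w_1,w_2,w_3,w_4$ along~$C$. As both sides now contain at least two leaves, neither $x$ nor $y$ is a leaf. I then exhibit a model of $K_{3,3}$ with branch-sets
\[
\{w_1\},\ \{w_3\},\ T_2^e\setminus\{w_2,w_4\}\qquad\text{and}\qquad \{w_2\},\ \{w_4\},\ T_1^e\setminus\{w_1,w_3\}.
\]
Each is connected (deleting leaves from a subtree preserves connectivity) and nonempty (the large sets contain $y$ resp.\ $x$), and they are pairwise disjoint. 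The nine required edges are present: the cycle-edges $w_1w_2,w_2w_3,w_3w_4,w_4w_1$ join singletons across the two classes; the pendant tree-edge at each $w_i$ joins it to the large branch-set lying in the same component $T_j^e$; and the edge $e=xy$ joins the two large branch-sets. This generalises the example $T\cup C_3\cong K_{3,3}$, and by Wagner's theorem shows $T\cup C$ is non-planar.

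The main obstacle is $(b)\Rightarrow(a)$, turning the combinatorial condition into an actual drawing. I would strengthen the statement and prove by induction on $|L(T)|$ the claim that \emph{if~(b) holds, then $T$ has a planar embedding in which every leaf lies on the outer face, appearing there in the cyclic order prescribed by~$C$}; then~(a) follows by routing the edges of~$C$ through the outer face. Since subdividing or suppressing degree-$2$ internal vertices affects none of~(a),(b),(c), I may assume every internal vertex has degree $\ge 3$; a longest path then supplies a vertex~$v$ all of whose neighbours but one, say~$w$, are leaves $c_1,\dots,c_m$ with $m\ge 2$. Applying~(b) to the edge $vw$ shows $c_1,\dots,c_m$ form an arc of~$C$; deleting them turns $v$ into a leaf and collapsing that arc to the single vertex~$v$ yields a cycle $C'$ on the new leaf-set. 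Replacing a contiguous block of a cyclic $2$-colouring by one point keeps every class contiguous, so $(T',C')$ inherits~(b); induction gives the desired embedding of~$T'$. Finally I re-attach $c_1,\dots,c_m$ as pendant edges fanned in order into the outer face at the outer leaf~$v$, reinstating exactly the cyclic order of~$C$. The only non-combinatorial steps are these last geometric moves, which I would justify cleanly through the rotation system of the embedding.
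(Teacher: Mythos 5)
Your overall architecture --- proving $(b)\Leftrightarrow(c)$ by a counting argument and $(a)\Leftrightarrow(b)$ by two separate implications --- is reasonable, and two of your three arguments are sound. The colour-change count for $(b)\Leftrightarrow(c)$ is essentially the paper's own argument for $(b)\Rightarrow(c)$, extended to the converse. Your inductive construction of an embedding for $(b)\Rightarrow(a)$ is a genuinely different route from the paper, which instead proves $(c)\Rightarrow(a)$ by observing that the fundamental cycles $aTb+ab$, $ab\in E(C)$, cover every edge of $T\cup C$ at most twice and invoking MacLane's planarity criterion; your route avoids MacLane at the cost of the topological bookkeeping you defer to rotation systems, and it can be made rigorous.

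The genuine gap is in your $(a)\Rightarrow(b)$. You choose $w_1,w_3\in V(C)_1^e$ and $w_2,w_4\in V(C)_2^e$ only \emph{in cyclic order} along $C$, and then assert that $w_1w_2,w_2w_3,w_3w_4,w_4w_1$ are ``cycle-edges''. In a minor model, connections between branch-sets must be actual edges of the host graph, and these four pairs are all edges of $C$ only when $V(C)=\{w_1,w_2,w_3,w_4\}$, i.e.\ when $T$ has exactly four leaves --- which is precisely the paper's $K_{3,3}$ example and nothing more. As soon as $C$ has more than four vertices, at least one of these pairs is joined only by an arc of $C$ with internal vertices, and you cannot route through that arc either: its internal vertices are leaves of $T$, hence lie inside your two large branch-sets $V(T_1^e)\setminus\{w_1,w_3\}$ and $V(T_2^e)\setminus\{w_2,w_4\}$, so absorbing or contracting the arc destroys disjointness. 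The construction is repairable, but needs reworking: for instance, take as branch-sets four maximal monochromatic arcs of $C$ (absorbing any remaining arcs into one of them) together with $V(T_1^e)$ and $V(T_2^e)$ stripped of all leaves of $T$; or build a topological $K_{3,3}$ whose subdivided edges are arcs of $C$ and tree-paths, with the two non-leaf branch vertices taken as the median of $\{w_1,w_3,x\}$ in $T_1^e$ and the median of $\{w_2,w_4,y\}$ in $T_2^e$ (tree-paths have no leaves as internal vertices, so they avoid the $C$-arcs). For comparison, the paper sidesteps minors entirely: in a planar drawing, $T$ lies in the closed disk bounded by $C$; if some $V(C)_i^e$ were disconnected in $C$, one picks $a,b$ in it from different components of $C[V(C)_i^e]$ and $c,d$ separating them on $C$, and the curves $aTb$ and $cTd$ must then cross inside the disk, forcing a common vertex --- impossible since they lie in different components of $T-e$.
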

	
	\begin{proof}
		(a) $\Rightarrow$ (b): Fix a planar drawing of $T \cup C$. The closed curve representing~$C$ divides the plane into two regions and the drawing of~$T$ lies in the closure of one of them. By symmetry, we may assume that it lies within the closed disk inscribed by~$C$. Let $A \sub V(C)$ disconnected and choose $a, b \in A$ from distinct components of~$C[A]$. $C$ is the disjoint union of two edge-disjoint $a$-$b$-paths $S_1, S_2$ and both of them must meet $C \setminus A$, say $c \in V(S_1) \setminus A$ and $d \in V(S_2) \setminus A$.
		
		The curves representing $aTb$ and $cTd$ lie entirely within the disk and so they must cross. Since the drawing is planar, $aTb$ and~$cTd$ have a common vertex. In particular, $A$ cannot be the set of leaves within a component of $T - e$ for any edge $e \in E(T)$.
		
		(b) $\Rightarrow$ (c): Let $e \in E(T)$. By assumption, there are precisely two edges $f_1, f_2 \in E(C)$ between $V(C)_1^e$ and $V(C)_2^e$. These edges are, by definition, the ones whose endpoints are separated in~$T$ by~$e$. By~(\ref{multiplicities traced walk tree}), $m_W(e) =2$.
		
		(c) $\Rightarrow$ (a): For $ab \in E(C)$, let $D_{ab} := aTb + ab \sub T \cup C$. The set $\D := \{ D_{ab} \colon ab \in E(C) \}$ of all these cycles is the fundamental cycle basis of $T \cup C$ with respect to the spanning tree~$T$. Every edge of~$C$ occurs in only one cycle of~$\D$. By assumption and~(\ref{multiplicities traced walk tree}), every edge of~$T$ lies on precisely two cycles in~$\D$. Covering every edge of the graph at most twice, the set~$\D$ is a \emph{sparse basis} of the cycle space of $T \cup C$. By MacLane's Theorem, $T \cup C$ is planar.
	\end{proof}
			
	\end{section}

	\begin{section}{Shortcut trees for trees}
	\label{sct on trees}
	
%
%
	
		\begin{proof}[Proof of Theorem~\ref{tree 2-geo}]
		
			Assume for a contradiction that $T \sub G$ was not fully geodesic and let $R \sub T$ be a shortcut tree for~$T$. Let $T' \sub T$ be the Steiner tree for $L(R)$ in~$T$. By Lemma~\ref{equality achieved}, there is a cycle~$C$ with $V(C) = L(R)$ such that
			\[
			2 \ell(R) = \sum_{ab \in E(C)} \dist_R(a, b) .
			\]
			Note that~$T'$ is 2-geodesic in~$T$ and therefore in~$G$, so that $\dist_{T'}(a,b) \leq \dist_R(a,b)$ for all $ab \in E(C)$. Since every leaf of~$T'$ lies in $L(R) = V(C)$, we can apply Lemma~\ref{equality achieved} to~$T'$ and~$C$ and conclude
			\[
			2 \ell(T') \leq \sum_{ab \in E(C)} \dist_{T'}(a, b) \leq  \sum_{ab \in E(C)} \dist_R(a, b) = 2\ell(R) ,
			\]
			which contradicts~\ref{sct shorter}.
		\end{proof}
	
	\end{section}

	\begin{section}{Shortcut trees for cycles}
	\label{sct on cycles}
		
	By Lemma~\ref{shortcut tree}, it suffices to prove the following.		
		
		\begin{theorem} \label{shortcut tree cycle strong}
			Let~$T$ be a shortcut tree for a cycle~$C$. Then $T \cup C$ is a subdivision of one of the five (multi-)graphs in Figure~\ref{five shortcut trees}. In particular, $C$ is not 6-geodesic in $T \cup C$.
		\end{theorem}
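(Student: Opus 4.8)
The plan is to study a shortcut tree~$T$ for~$C$ by comparing two cyclic structures on its leaf set~$L(T)$: the order in which the leaves appear along~$C$, and the orders produced by tracing cycles through~$T$. Write $k := |L(T)|$. Since~\ref{sct shorter} fails when~$T$ is a single vertex, $k \ge 2$, and if $k = 2$ then~$T$ is a path between two vertices of~$C$, so $T \cup C$ is a subdivision of the theta multigraph on two vertices with three parallel edges; this accounts for the first of the five graphs, and I assume $k \ge 3$ henceforth. Every vertex of~$C$ outside~$L(T)$ and every degree-two vertex of~$T$ is suppressible, so after suppression~$C$ is a $k$-cycle on~$L(T)$ and~$T$ is a topological tree with leaf set~$L(T)$; the claim is thus that the combined multigraph of this tree and this cyclic order is one of five types.

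First I would extract the metric constraints hidden in the shortcut axioms. Label the leaves $v_1, \dots, v_k$ in their cyclic order on~$C$, let~$P_i$ be the arc from~$v_i$ to~$v_{i+1}$, and put $M := \max_i \ell(P_i)$, so that $\sd_C(L(T)) = \ell(C) - M$. The Steiner tree in~$T$ for $B_j := L(T) \setminus \{v_j\}$ is~$T$ with the pendant path to~$v_j$ removed; feeding this into~\ref{sct minim} and combining with~\ref{sct shorter} forces $\ell(P_{j-1}) + \ell(P_j) > M$ for every~$j$, and an elementary argument then yields $M < \tfrac{1}{2} \ell(C)$. Consequently every arc is shorter than half of~$C$ and $\dist_C(v_i, v_{i+1}) = \ell(P_i)$, so on the $C$-side the distances between cyclically consecutive leaves are exactly arc-lengths.

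The engine of the proof is the tracing machinery of Section~\ref{toolbox}. For any cycle~$C'$ with $V(C') = L(T)$, let $W_T, W_C$ be walks traced by~$C'$ in~$T$ and in~$C$; as $k \ge 3$, each edge of~$C'$ spans a proper subset of~$L(T)$, so~\ref{sct minim} and~(\ref{length traced walk}) give $\len(W_C) \le \len(W_T)$. By Lemma~\ref{pos even} the multiplicities $\mult_{W_T}(e)$ are positive and even, and by Lemma~\ref{equality achieved} some~$C'$ achieves $\mult_{W_T}(e) = 2$ throughout; Lemma~\ref{euler planar} identifies these with the cyclic orders for which $T \cup C'$ is planar. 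The guiding principle, exactly as sketched in Section~\ref{toolbox}, is that a shortcut tree admits \emph{no} cycle~$C'$ with $\mult_{W_T}(e) \le 2$ on all of~$T$ and $\mult_{W_C}(e) \ge 2$ along a connected subgraph of~$C$ spanning~$L(T)$: such a~$C'$ would give $2\,\sd_C(L(T)) \le \len(W_C) \le \len(W_T) \le 2\,\ell(T)$, contradicting~\ref{sct shorter}. Classifying shortcut trees therefore reduces to determining which pairs (topological tree, cyclic leaf-order) admit no such witnessing~$C'$.

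This last reduction is the crux and the hardest part. I would carry it out by a case analysis on the branching structure of~$T$ together with the interleaving pattern of~$L(T)$ on~$C$, using Lemma~\ref{euler planar} to convert the condition $\mult_{W_T} = 2$ into planarity of $T \cup C'$ and to decide, for each candidate branching, whether a doubly-covering cycle on the $C$-side can be routed so as to keep the $T$-side multiplicities at two. The metric bound $M < \tfrac{1}{2}\ell(C)$ is what pins the $C$-side multiplicities to the cyclic order, so that the obstruction becomes purely combinatorial. The expected outcome is that a witnessing~$C'$ always exists once $k \ge 7$, and also whenever the configuration avoids a short list; the configurations that survive are precisely the five (multi-)graphs of Figure~\ref{five shortcut trees}, the largest of which has six leaves. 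Since each then satisfies $|L(T)| \le 6$ and, by the remark following the shortcut-tree axioms, $C$ fails to be $|L(T)|$-geodesic in $T \cup C$, it is a fortiori not $6$-geodesic, which is the statement needed to deduce Theorem~\ref{cycle 6-geo}.
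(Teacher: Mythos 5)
Your framework is the right one and, up to the point where it stops, matches the paper's: the reduction by suppression to $V(C) = L(T)$, the trivial case $|L(T)| = 2$, the consecutive-arc inequality $\ell(P_{j-1}) + \ell(P_j) > M$ with the consequence $M < \ell(C)/2$ (this is exactly Lemma~\ref{consec cycle long}), and the guiding principle that a cycle $C'$ whose traced walk covers each edge of~$T$ at most twice and each edge of a connected spanning subgraph of~$C$ at least twice contradicts~\ref{sct shorter}. But the proof stops precisely where the work begins. The classification that you yourself identify as the crux is asserted, not proved: you state as an ``expected outcome'' that a witnessing $C'$ exists except for five configurations. What the paper actually has to establish, in sequence, is: that for every edge $e \in E(T)$ the leaf-sets of the two components of $T - e$ are each connected in~$C$ (Lemma~\ref{cover disjoint trees}); hence that $T \cup C$ is planar \emph{and} that $T$ is 3-regular (Lemma~\ref{planar 3reg}); that some vertex $t \in T$ has all of its attached leaf-segments, extended by either boundary edge, shorter than $\ell(C)/2$ (Lemma~\ref{good root}, proved by a nontrivial orientation argument on the edges of~$T$); and that if some segment carries at least three leaves, then some segment together with \emph{both} its boundary edges has length at most $\ell(C)/2$ (Lemma~\ref{jumps}). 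Only then does tracing the specific cycle $Q = a_1b_2a_2a_3b_3b_1a_1$ through $T$ and $C$ produce the contradiction, and only the 3-regular configurations with all segments of size at most two survive, which are the five graphs. None of these steps, nor any substitute for them, appears in your proposal.

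There is also a substantive error in the claim on which your proposed reduction rests: knowing $M < \tfrac{1}{2}\ell(C)$ does \emph{not} pin the $C$-side multiplicities to the cyclic order, so the obstruction does not become ``purely combinatorial''. That bound only identifies the geodesics between \emph{consecutive} leaves; for two leaves far apart on~$C$, which of the two arcs between them is shorter depends on the length-function and not on the cyclic order. This is exactly why the paper needs the genuinely metric Lemmas~\ref{good root} and~\ref{jumps}: they provide just enough control of distances between non-consecutive leaves (e.g.\ $\dist_C(a_1,a_2) = \ell_1 + \ell(f_1)$ and $\dist_C(a_1,b_2) = \ell(f_2) + \ell_3 + \ell(f_3)$) to evaluate $\len(W_C)$ for the cycle~$Q$. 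Without such control, your case analysis cannot even be formulated, because whether a candidate $C'$ is witnessing is not determined by the pair (tree, cyclic leaf-order). Indeed, the five surviving configurations themselves admit length-functions turning them into shortcut trees (Figure~\ref{cycleshortcutlength}), so any correct argument must at some point quantify over length-functions rather than decide a combinatorial property of the pair; your proposal never does.
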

		
		\begin{figure}[h]
	\begin{center}
	\includegraphics[width=10cm]{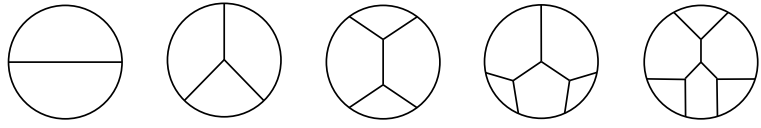}
	\caption{The five possible shortcut trees for a cycle}
	\label{five shortcut trees}
\end{center}					
		\end{figure}
		
		Theorem~\ref{shortcut tree cycle strong} is best possible in the sense that for each of the graphs in Figure~\ref{five shortcut trees} there exists a length-function which makes the tree inside a shortcut tree for the outer cycle, see Figure~\ref{cycleshortcutlength}. These length-functions were constructed in a joint effort with Pascal Gollin and Karl Heuer in an ill-fated attempt to prove that a statement like Theorem~\ref{cycle 6-geo} could not possibly be true.
		
			\begin{figure}[h]
				\begin{center}
					\includegraphics[width=10cm]{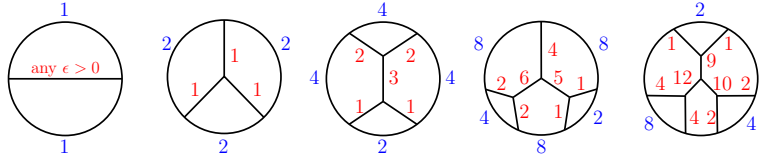}
				\end{center}
					\label{cycleshortcutlength}
					\caption{Shortcut trees for cycles}
			\end{figure}

		This section is devoted entirely to the proof of Theorem~\ref{shortcut tree cycle strong}. Let~$T$ be a shortcut tree for a cycle~$C$ with length-function $\ell : E(T \cup C) \to \RR^+$ and let $L := L(T)$. 
			     	
	The case where $|L| = 2$ is trivial, so we henceforth assume that $|L| \geq 3$. By suppressing any degree-2 vertices, we may assume without loss of generality that $V(C) = L(T)$ and that~$T$ contains no vertices of degree~2. 			     		
	
				     			\begin{lemma} \label{cover disjoint trees}
			     			Let $T_1, T_2 \sub T$ be edge-disjoint trees. For $i \in \{ 1, 2\}$, let $L_i := L \cap V(T_i)$. If $L = L_1 \cup L_2$ is a non-trivial bipartition of~$L$, then both $C[L_1], C[L_2]$ are connected.
			     			\end{lemma}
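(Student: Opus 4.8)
The plan is to turn the two hypotheses \ref{sct shorter} and \ref{sct minim} into a single inequality comparing Steiner distances inside the cycle~$C$, and then to contradict it by a direct analysis of Steiner trees on a cycle. Since the bipartition is non-trivial, both $L_1$ and $L_2$ are non-empty, hence proper subsets of $L = L(T)$; so \ref{sct minim} applies and yields $\sd_C(L_i) \le \sd_T(L_i)$ for $i \in \{1,2\}$. Because $L_i \sub V(T_i)$ and each $T_i$ is connected, the unique Steiner tree $\bigcup_{a,b\in L_i} aTb$ for $L_i$ in~$T$ is contained in~$T_i$, so $\sd_T(L_i) \le \ell(T_i)$. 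As $T_1$ and $T_2$ are edge-disjoint subtrees of~$T$, their edge-lengths add up to at most $\ell(T)$, and \ref{sct shorter} gives $\ell(T) < \sd_C(L)$. Chaining these bounds produces the key inequality
\[
\sd_C(L_1) + \sd_C(L_2) \ \le\ \ell(T_1) + \ell(T_2) \ \le\ \ell(T) \ <\ \sd_C(L) .
\]

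It then suffices to establish the following purely cycle-theoretic fact, the negation of whose conclusion I will contradict: if $L = L_1 \cup L_2$ is a bipartition of $V(C)$ into non-empty parts and $C[L_1]$ is disconnected, then $\sd_C(L_1) + \sd_C(L_2) \ge \sd_C(L)$. Since the number of edges of~$C$ across the bipartition is even, $C[L_1]$ is disconnected precisely when $C[L_2]$ is, namely when there are at least four crossing edges; so I may assume both are disconnected. To compute Steiner distances I would use that, for non-empty $B \sub V(C)$, a Steiner tree for~$B$ in~$C$ is a shortest sub-path of~$C$ containing~$B$, obtained by deleting a longest sub-path $A_B$ of~$C$ that joins two cyclically consecutive vertices of~$B$ and has no vertex of~$B$ in its interior; thus $\sd_C(B) = \ell(C) - \ell(A_B)$. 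In particular $\sd_C(L) = \ell(C) - \ell(e^\ast)$, where $e^\ast$ is a longest edge of~$C$, and the target inequality rewrites as
\[
\ell(A_{L_1}) + \ell(A_{L_2}) \ \le\ \ell(C) + \ell(e^\ast) .
\]

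The crux, and the step I expect to be the main obstacle, is to bound $\ell(A_{L_1} \cap A_{L_2})$, after which $\ell(A_{L_1}) + \ell(A_{L_2}) = \ell(A_{L_1} \cup A_{L_2}) + \ell(A_{L_1} \cap A_{L_2}) \le \ell(C) + \ell(A_{L_1} \cap A_{L_2})$ finishes the estimate. By definition $A_{L_1}$ has no vertex of~$L_1$ in its interior and $A_{L_2}$ none of~$L_2$; as $V(C) = L_1 \cup L_2$, any interior vertex of a component of $A_{L_1} \cap A_{L_2}$ would lie in neither part, which is impossible, so every such component is a single edge and has length at most $\ell(e^\ast)$. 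It remains to rule out two such components. Here I would use the block structure of the bipartition along~$C$: the arc $A_{L_1}$ consists of a single maximal $L_2$-run flanked by two crossing edges (or is a single edge interior to an $L_1$-run), and symmetrically for $A_{L_2}$; if the two arcs shared two crossing edges, the $L_2$-run inside $A_{L_1}$ and the $L_1$-run inside $A_{L_2}$ would be mutually adjacent on both sides, forcing $C[L_1]$ and $C[L_2]$ to be connected, contrary to assumption. Hence $A_{L_1} \cap A_{L_2}$ is at most a single edge, $\ell(A_{L_1} \cap A_{L_2}) \le \ell(e^\ast)$, and the desired inequality follows, contradicting the displayed bound and proving both $C[L_1]$ and $C[L_2]$ connected. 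The delicate point throughout is the geometric bookkeeping that two longest gaps of opposite type cannot overlap in more than one edge once both sides fail to be connected.
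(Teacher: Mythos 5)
Your proof is correct, but after the common opening it takes a genuinely different and considerably heavier route than the paper's. The first half is identical in substance: both arguments use \ref{sct minim} (applicable because $L_1, L_2 \subsetneq L$), the containment of the Steiner tree of~$L_i$ in~$T$ inside~$T_i$, edge-disjointness, and \ref{sct shorter} to arrive at $\sd_C(L_1) + \sd_C(L_2) \le \ell(T_1) + \ell(T_2) \le \ell(T) < \sd_C(L)$. The paper then concludes with a soft gluing argument rather than your metric analysis: taking connected $S_i \sub C$ with $L_i \sub V(S_i)$ and $\ell(S_i) \le \sd_T(L_i)$, if $C[L_1]$ were disconnected then $S_1$ would have to contain a vertex outside~$L_1$, hence a vertex of~$L_2$ (recall $V(C) = L$ after the suppression of degree-2 vertices), hence a vertex shared with~$S_2$; so $S_1 \cup S_2$ is connected, contains~$L$, and has length at most $\ell(T)$, contradicting \ref{sct shorter}. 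Note that this gluing observation is precisely a two-line proof of your ``purely cycle-theoretic fact'': choosing the~$S_i$ to be Steiner trees for~$L_i$ in~$C$, the forced intersection gives $\sd_C(L) \le \ell(S_1 \cup S_2) \le \sd_C(L_1) + \sd_C(L_2)$ whenever $C[L_1]$ is disconnected. Your proof of that fact via the longest-gap formula $\sd_C(B) = \ell(C) - \ell(A_B)$ does hold up: the reduction to both parts being disconnected, the observation that every component of $A_{L_1} \cap A_{L_2}$ is a single edge, and the exclusion of two shared crossing edges are all sound. But it is exactly the delicate bookkeeping you yourself flag, and it is not needed, since connectivity alone forces the two Steiner trees to meet. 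What your route buys is explicit structural information about how the extremal arcs on the cycle can overlap; what the paper's buys is brevity and robustness, since it never computes a single Steiner distance on~$C$.
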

			     			
			     			\begin{proof}
			     				By~\ref{sct minim} there are connected $S_1, S_2 \sub C$ with $\ell(S_i) \leq \sd_T(L_i) \leq \ell(T_i)$ for $i \in \{ 1, 2 \}$. Assume for a contradiction that~$C[L_1]$ was not connected. Then $V(S_1) \cap L_2$ is non-empty and $S_1 \cup S_2 $ is connected, contains~$L$ and satisfies
			     	\[
			     	\ell( S_1 \cup S_2) \leq \ell(S_1) + \ell(S_2) \leq \ell(T_1) + \ell(T_2) \leq \ell(T) ,
			     	\]
			     	which contradicts~\ref{sct shorter}.
			     			\end{proof}		     	
			     	
			     	\begin{lemma} \label{planar 3reg}
			     		$T \cup C$ is planar and 3-regular.
			     	\end{lemma}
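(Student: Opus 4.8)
The plan is to treat planarity and $3$-regularity separately, in both cases leaning on Lemma~\ref{cover disjoint trees}. For planarity I would verify condition~(b) of Lemma~\ref{euler planar}. Fix an edge $e \in E(T)$ and let $T_1^e, T_2^e$ be the two components of $T - e$. These are edge-disjoint subtrees of~$T$, and since $L(T) = V(C)$ the induced bipartition $V(C) = V(C)_1^e \cup V(C)_2^e$ is non-trivial. Applying Lemma~\ref{cover disjoint trees} to $T_1^e$ and $T_2^e$ shows that both $C[V(C)_1^e]$ and $C[V(C)_2^e]$ are connected in~$C$. As this holds for every $e \in E(T)$, condition~(b) of Lemma~\ref{euler planar} is met and hence $T \cup C$ is planar.

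For $3$-regularity I would first record the easy structural facts. The vertex set of $T \cup C$ is $V(T)$, and by~\ref{sct edge} the edge sets of~$T$ and~$C$ are disjoint, so the degree of a vertex in $T \cup C$ is the sum of its degrees in~$T$ and in~$C$. Every leaf of~$T$ lies on~$C$ and thus has degree $1 + 2 = 3$. An internal vertex has degree~$0$ in~$C$ and, because all degree-$2$ vertices have been suppressed, degree at least~$3$ in~$T$; the task is to upgrade this to exactly~$3$.

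The key step is to encode the branches at an internal vertex as a partition of the cycle. Let $v$ be internal of degree~$d$. For each of the $d$ edges at~$v$, let $B_i$ be the subtree of~$T$ consisting of that edge together with the component of $T - v$ containing its far endpoint, and set $L_i := L \cap V(B_i)$. The $B_i$ pairwise meet only in~$v$, their union is~$T$, and the $L_i$ partition $V(C)$ into $d$ nonempty blocks. For any proper nonempty $I \sub \{1, \ldots, d\}$ the subtrees $\bigcup_{i \in I} B_i$ and $\bigcup_{i \notin I} B_i$ are edge-disjoint and realize the non-trivial bipartition $\bigl(\bigcup_{i \in I} L_i,\, \bigcup_{i \notin I} L_i\bigr)$ of~$L$, so Lemma~\ref{cover disjoint trees} forces $C\bigl[\bigcup_{i \in I} L_i\bigr]$ to be connected.

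The main obstacle is turning this family of connectivity constraints into a bound on~$d$, and the point is that requiring connectivity for \emph{all} such~$I$ is very rigid. Taking $I = \{i\}$ shows each $L_i$ is a single arc of~$C$, so the blocks $L_1, \ldots, L_d$ occur in a well-defined cyclic order around~$C$; relabel them accordingly. If $d \geq 4$, then the blocks $L_1$ and $L_3$ are separated on both sides of the cycle, by $L_2$ on one side and by the nonempty family $L_4, \ldots, L_d$ on the other, so $C[L_1 \cup L_3]$ is disconnected, contradicting the conclusion of Lemma~\ref{cover disjoint trees} for $I = \{1, 3\}$. Hence $d \leq 3$, so $d = 3$ and $T \cup C$ is $3$-regular. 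Note this last argument is purely combinatorial and does not reuse planarity, though the arc picture is exactly the drawing of~$T$ inside the disk bounded by~$C$ that planarity provides.
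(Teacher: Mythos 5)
Your proof is correct, and it diverges from the paper's in an interesting way on the second half. The planarity part is identical: both you and the paper verify condition (b) of Lemma~\ref{euler planar} by applying Lemma~\ref{cover disjoint trees} to the two components of $T - e$. For 3-regularity, the paper runs the same contradiction scheme as you (a vertex $t$ with $d \geq 4$ branches, unions of branches fed into Lemma~\ref{cover disjoint trees}), but it obtains the cyclic order of the branches from a planar drawing of $T \cup C$, which it has just established exists: it reads off the cyclic order $t_1, \ldots, t_d$ of the neighbours of $t$ in the drawing, groups the branches into the odd-indexed and even-indexed unions, and asserts that neither of the two corresponding leaf sets induces a connected subgraph of $C$. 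You instead derive the cyclic order combinatorially: applying Lemma~\ref{cover disjoint trees} to each single branch against the union of the others shows that every $L_i$ is an arc of $C$, the arcs then have a well-defined cyclic order, and the choice $I = \{1,3\}$ yields the contradiction. Your route buys a small but genuine gain in self-containedness: the paper's claim that both alternating unions are disconnected tacitly uses that the leaf sets of the branches meet $C$ in the same cyclic order in which the branches leave $t$ in the drawing (a Jordan-curve-style fact left implicit), whereas your arc argument extracts the needed ordering from Lemma~\ref{cover disjoint trees} itself, so the 3-regularity half of the lemma does not depend on the planarity half at all.
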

			     	
			     			\begin{proof}
			     	Let $e \in E(T)$, let $T_1, T_2$ be the two components of $T - e$ and let $L = L_1 \cup L_2$ be the induced (non-trivial) bipartition of~$L$. By Lemma~\ref{cover disjoint trees}, both $C[L_1], C[L_2]$ are connected. Therefore $T \cup C$ is planar by Lemma~\ref{euler planar}.
			     	
			     	
			     	To see that $T \cup C$ is 3-regular, it suffices to show that no $t \in T$ has degree greater than~3 in~$T$. We just showed that $T \cup C$ is planar, so fix some planar drawing of it. Suppose for a contradiction that $t \in T$ had $d \geq 4$ neighbors in~$T$. In the drawing, these are arranged in some cyclic order as $t_1, t_2, \ldots, t_d$. For $j \in [d]$, let $R_j := T_j + t$, where~$T_j$ is the component of $T - t$ containing~$t_j$. Let~$T_{\odd}$ be the union of all~$R_j$ for odd $j \in [d]$ and~$T_{\even}$ the union of all~$R_j$ for even $j \in [d]$. Then $T_{\odd}, T_{\even} \sub T$ are edge-disjoint and yield a nontrivial bipartition $L = L_{\odd} \cup L_{\even}$ of the leaves. But neither of $C[L_{\odd}], C[L_{\even}]$ is connected, contrary to Lemma~\ref{cover disjoint trees}.
			     	
%
%
			     			\end{proof}

			\begin{lemma} \label{consec cycle long}
		Let $e_0 \in E(C)$ arbitrary. Then for any two consecutive edges $e_1, e_2$ of~$C$ we have $\ell(e_1) + \ell(e_2) > \ell(e_0)$. In particular $\ell(e_0) < \ell(C)/2$.
	\end{lemma}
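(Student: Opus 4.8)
The plan is to establish the core inequality $\ell(e_1) + \ell(e_2) > \ell(e_0)$ directly, by contradiction, and then to read off the ``in particular'' clause from it. Write $e_1 = uv$ and $e_2 = vw$ for the two consecutive edges, so that they share the vertex~$v$. Since $v \in V(C) = L$, it is a leaf of~$T$; let $f \in E(T)$ be the unique tree-edge incident with~$v$. The set $B := L \setminus \{ v \}$ is a proper subset of~$L$, so by~\ref{sct minim} we have $\sd_C(B) \leq \sd_T(B)$. The whole strategy is to evaluate both sides of this inequality explicitly and then confront the outcome with~\ref{sct shorter}.

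Write $M := \max_{e \in E(C)} \ell(e)$. Two computations drive the argument. First, since $T$ is a tree and $B$ arises from $L$ by deleting the single leaf~$v$, the (unique) Steiner tree for~$B$ in~$T$ is $T - v$, whence
\[
\sd_T(B) = \ell(T) - \ell(f) < \ell(T).
\]
Second, I would determine $\sd_C(B)$ by a short case analysis on whether a minimiser uses~$v$ as an interior Steiner vertex. A connected subgraph of the cycle~$C$ is an arc; one covering~$B$ either omits~$v$ --- forcing it to be the arc $C - e_1 - e_2$ of length $\ell(C) - \ell(e_1) - \ell(e_2)$ --- or else contains~$v$, hence spans $V(C)$ and has length at least $\ell(C) - M$. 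Therefore
\[
\sd_C(B) = \min\{\, \ell(C) - \ell(e_1) - \ell(e_2), \; \ell(C) - M \,\}.
\]
Recognising that the active branch here is governed precisely by comparing $\ell(e_1) + \ell(e_2)$ with~$M$ is the crux; everything else is bookkeeping.

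Now suppose for contradiction that $\ell(e_1) + \ell(e_2) \leq \ell(e_0)$ for some $e_0 \in E(C)$. Then $\ell(e_1) + \ell(e_2) \leq \ell(e_0) \leq M$, so the minimum above is attained by its second branch and $\sd_C(B) = \ell(C) - M = \sd_C(L)$. Chaining the two computations yields
\[
\sd_C(L) = \sd_C(B) \leq \sd_T(B) = \ell(T) - \ell(f) < \ell(T),
\]
which flatly contradicts~\ref{sct shorter}, namely $\ell(T) < \sd_C(L)$. Hence $\ell(e_1) + \ell(e_2) > \ell(e_0)$ for every $e_0 \in E(C)$, as claimed.

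For the final assertion, fix $e_0 \in E(C)$ and delete it; since $|V(C)| \geq 3$, the remaining edges form a path with at least two edges, the first two of which, say $e_1$ and~$e_2$, are consecutive. By what we just proved, $\ell(e_1) + \ell(e_2) > \ell(e_0)$, and since all remaining edge-lengths are positive we get $\ell(C) - \ell(e_0) \geq \ell(e_1) + \ell(e_2) > \ell(e_0)$, i.e.\ $\ell(e_0) < \ell(C)/2$. I expect the only genuinely delicate point to be the evaluation of $\sd_C(B)$: one must remember that $v$ may legitimately be reused as a Steiner vertex, so that $\sd_C(B)$ is \emph{not} simply $\ell(C) - \ell(e_1) - \ell(e_2)$, and it is exactly the competition between this value and $\ell(C) - M$ that feeds the contradiction.
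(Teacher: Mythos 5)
Your proof is correct and takes essentially the same route as the paper's: both apply~\ref{sct minim} to the set $B = L \setminus \{v\}$ together with~\ref{sct shorter}, and both exploit that a connected subgraph of~$C$ containing~$B$ is either the path $C - v$ (of length $\ell(C) - \ell(e_1) - \ell(e_2)$) or spans all of~$L$ --- which is exactly your min-formula dichotomy. The paper argues directly (every Steiner tree for~$B$ in~$C$ must avoid~$v$, so $C - e_0$ is strictly longer), whereas you argue by contradiction via $\sd_C(B) = \sd_C(L)$, but the mathematical content is identical.
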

	
	\begin{proof}
		Suppose that $e_1, e_2 \in E(C)$ are both incident with $x \in L$. Let $S \sub C$ be a Steiner tree for $B := L \setminus \{ x \}$ in~$C$. By~\ref{sct minim} and~\ref{sct shorter} we have
		\[
		\ell(S) \leq \sd_T(B) \leq \ell(T) < \sd_C(L) .
		\]
		Thus $x \notin S$ and $E(S) = E(C) \setminus \{ e_1, e_2 \}$. Thus $P := C - e_0$ is not a Steiner tree for~$B$ and we must have $\ell(P) > \ell(S)$.
	\end{proof}

		Let $t \in T$ and~$N$ its set of neighbors in~$T$. For every $s \in N$ the set~$L_s$ of leaves~$x$ with $s \in tTx$ is connected in~$C$. Each $C[L_s]$ has two edges $f_s^1, f_s^2 \in E(C)$ incident to it.

		\begin{lemma} \label{good root}
		There is a $t \in T$ such that for every $s \in N$ and any $f \in \{ f_s^1, f_s^2 \}$ we have $\ell(C[L_s] + f) < \ell(C)/2$.
		\end{lemma}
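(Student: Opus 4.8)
The plan is to locate a good root among the internal vertices of~$T$ by excluding a single ``bad'' local pattern everywhere at once. Call a directed tree-edge $\overrightarrow{ts}$ \emph{heavy} if $\ell(C[L_s] + f) \geq \ell(C)/2$ for one of the two bounding edges $f \in \{ f_s^1, f_s^2 \}$, equivalently if $\ell(C[L_s]) + \max(\ell(f_s^1), \ell(f_s^2)) \geq \ell(C)/2$; then a vertex is good precisely when none of its outgoing directions is heavy. First I would note that whenever $s$ is a leaf of~$T$ we have $L_s = \{ s \}$, so $C[L_s]$ has no edges and $C[L_s] + f$ is a single edge of~$C$, which has length $< \ell(C)/2$ by Lemma~\ref{consec cycle long}. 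Hence heavy directions always point from one internal vertex to another. Since the internal vertices induce a subtree~$T'$ of~$T$, a good root exists unless every internal vertex is bad. If~$T'$ is a single vertex, then~$T$ is a star, all its directions point to leaves and are light, so that vertex is good. Otherwise, assuming every internal vertex is bad, I would choose for each $t \in V(T')$ one heavy outgoing direction $f(t) \in V(T')$; this is a total map with $f(t)$ adjacent to~$t$, and the functional graph of any such map on a finite tree must contain a $2$-cycle, that is, an edge $tt'$ of~$T'$ heavy in both directions.

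The crux is therefore to show that no edge $e = tt'$ with both endpoints internal can be heavy in both directions, and here I would use~\ref{sct minim} together with~\ref{sct shorter} rather than Lemma~\ref{consec cycle long} alone. Write $A, B$ for the leaf-arcs on the $t'$- and $t$-sides of~$e$, and $g_1, g_2$ for the bounding edges between them, with $p := \ell(g_1) \geq \ell(g_2) =: q$. Since $t, t'$ are internal, each of $A, B$ contains at least two leaves and both $C[A], C[B]$ are connected by Lemma~\ref{cover disjoint trees}. Heaviness in both directions forces $\ell(A) + p \geq \ell(C)/2$ and $\ell(B) + p \geq \ell(C)/2$, hence also $\ell(A) + q \leq \ell(C)/2$ and $\ell(B) + q \leq \ell(C)/2$. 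Applying~\ref{sct minim} to the proper subsets $A$ and~$B$, whose Steiner trees in~$T$ are the two sides $T_{t'}, T_t$ of $T - e$, gives $\ell(A) \leq \ell(T_{t'})$ and $\ell(B) \leq \ell(T_t)$, so $\ell(T) \geq \ell(A) + \ell(B) + \ell(e)$. Together with~\ref{sct shorter} and $\sd_C(L) = \ell(C) - \max_e \ell(e) \leq \ell(C) - p$ this yields $\ell(e) < q$.

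To reverse this inequality I would apply~\ref{sct minim} to the two ``crossing'' pairs of extreme leaves: let $a_1, a_k$ be the ends of~$A$ (adjacent to $g_1, g_2$) and $b_1, b_m$ the ends of~$B$ (adjacent to $g_2, g_1$). The heaviness inequalities rule out the long way round, so $\sd_C(\{ b_m, a_k \}) = \ell(B) + q$ and $\sd_C(\{ b_1, a_1 \}) = \ell(A) + q$. Crucially, since $t, t'$ are internal and $T \cup C$ is planar by Lemma~\ref{planar 3reg}, the paths $b_m T a_k$ and $b_1 T a_1$ leave~$t$ and~$t'$ through different branches and so share only the edge~$e$; summing the two instances of~\ref{sct minim} then gives $\ell(T) + \ell(e) \geq \ell(A) + \ell(B) + 2q$, which with~\ref{sct shorter} forces $\ell(e) > q$. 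This contradicts $\ell(e) < q$, completing the crux and hence the lemma.

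I expect the main obstacle to be exactly this last step: correctly choosing the crossing pairs, verifying that their shortest $C$-paths run the short way (which is precisely where the two heaviness inequalities are consumed), and checking that the two tree-paths are edge-disjoint apart from~$e$, so that the summed minimality bound is governed by $\ell(T) + \ell(e)$ and not by something larger. The remaining ingredients---that leaf-directions are light, that the internal vertices form a subtree, and that the map~$f$ produces a $2$-cycle---are routine.
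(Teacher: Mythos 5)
Your proof is correct and shares the paper's skeleton: orient each tree-edge towards a ``heavy'' side, show that no edge of~$T$ is heavy in both directions, and extract a sink by finiteness (the paper counts arcs of an auxiliary digraph on $V(T)$; your functional-graph argument on the internal subtree is the same device). Where you genuinely diverge is in the proof of the antisymmetry claim itself. The paper forms the $4$-cycle $Q = a_tb_sa_sb_ta_t$ on the four arc-ends and compares walks traced by~$Q$: by~\ref{sct minim}, $\len(W_C) \le \len(W_T) \le 2\ell(T)$, while the two heaviness assumptions give $\len(W_C) = 2\ell(C - b_sb_t) \ge 2\sd_C(L)$, contradicting~\ref{sct shorter}; this needs only Lemma~\ref{cover disjoint trees} and no planarity. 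You instead trap $\ell(e)$ between two bounds, $\ell(e) < q$ (from~\ref{sct minim} applied to the arcs $A, B$ themselves, plus $\sd_C(L) \le \ell(C) - p$ and~\ref{sct shorter}) and $\ell(e) > q$ (from~\ref{sct minim} applied to the two crossing pairs, plus edge-disjointness of the two tree-paths apart from~$e$). Your route is more concrete but consumes more structure, and two steps need small repairs. First, ``$\ell(A) \le \ell(T_{t'})$'' is not literally what~\ref{sct minim} gives: it bounds $\sd_C(A)$, so you must check $\sd_C(A) = \ell(C[A])$, i.e.\ rule out a wrap-around Steiner tree for~$A$ in~$C$; this does hold, since such a subgraph contains $C$ minus a single interior edge of $C[A]$ and so has length greater than $\ell(C)/2 \ge \ell(A) + q$ by Lemma~\ref{consec cycle long}. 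Second, the branch-disjointness of $b_mTa_k$ and $b_1Ta_1$ is correct but is best justified via Lemma~\ref{cover disjoint trees} (equivalently, condition (b) of Lemma~\ref{euler planar}): the leaf-sets beyond the non-$t'$ neighbours of~$t$ are contiguous sub-arcs partitioning~$B$, so $b_1$ and $b_m$ lie beyond different neighbours, and internality together with the absence of degree-$2$ vertices guarantees there are at least two such neighbours. Finally, a harmless misstatement: heavy directions do not always run ``from one internal vertex to another'' --- the direction from a leaf towards its unique neighbour is always heavy --- but your argument only needs that heavy directions leaving internal vertices end at internal vertices, which your leaf observation does establish.
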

		
		\begin{proof}
			We construct a directed graph~$D$ with $V(D) = V(T)$ as follows. For every $t \in T$, draw an arc to any $s \in N$ for which $\ell(C[L_s] + f_s^i) \geq \ell(C)/2$ for some $i \in \{ 1, 2 \}$.
			
			\textit{Claim:} If $\overrightarrow{ts} \in E(D)$, then $\overrightarrow{st} \notin E(D)$.
			
			Assume that there was an edge $st \in E(T)$ for which both $\overrightarrow{st}, \overrightarrow{ts} \in E(D)$. Let $T_s, T_t$ be the two components of $T - st$, where $s \in T_s$, and let $L = L_s \cup L_t$ be the induced bipartition of~$L$. By Lemma~\ref{cover disjoint trees}, both $C[L_s]$ and $C[L_t]$ are connected paths, say with endpoints $a_s, b_s$ and $a_t, b_t$ (possibly $a_s = b_s$ or $a_t = b_t$) so that $a_sa_t \in E(C)$ and $b_sb_t \in E(C)$ (see Figure~\ref{badneighbors}). Without loss of generality $\ell(a_sa_t) \leq \ell(b_sb_t)$. Since $\overrightarrow{ts} \in E(D)$ we have $\ell(C[L_t] + b_sb_t) \geq \ell(C)/2$ and therefore $C[L_s] + a_sa_t$ is a shortest $a_t$-$b_s$-path in~$C$. 
			Similarly, it follows from $\overrightarrow{st} \in E(D)$ that $\dist_C( a_s, b_t) = \ell(C[L_t] + a_sa_t)$.
			
	\begin{figure}
		\begin{center}
			\includegraphics[width=5cm]{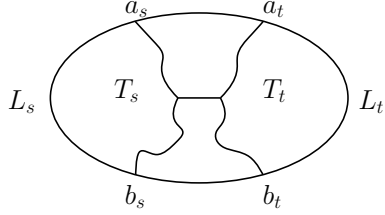}
		\end{center}
		\label{badneighbors}
		\caption{The setup in the proof of Lemma~\ref{good root}}
\end{figure}				
			
			Consider the cycle $Q := a_tb_sa_sb_ta_t$ and let $W_T, W_C$ be walks traced by~$Q$ in~$T$ and in~$C$, respectively. Then $\len(W_T) \leq 2 \, \ell(T)$, whereas
			\[
				\len(W_C) = 2 \, \ell (C - b_sb_t) \geq 2 \, \sd_C(L) .
			\]
			By~\ref{sct minim} we have $\dist_C(x,y) \leq \dist_T(x,y)$ for all $x, y \in L$ and so $\len(W_C) \leq \len(W_T)$. But then $\sd_C(L) \leq \ell(T)$, contrary to~\ref{sct shorter}. This finishes the proof of the claim.
			
			Since every edge of~$D$ is an orientation of an edge of~$T$ and no edge of~$T$ is oriented both ways, it follows that~$D$ has at most $|V(T)| - 1$ edges. Since~$D$ has $|V(T)|$ vertices, there is a $t \in V(T)$ with no outgoing edges.
		\end{proof}
		
			Fix a node $t \in T$ as guaranteed by the previous lemma. If~$t$ was a leaf with neighbor~$s$, say, then $\ell(f_s^1) = \ell(C) - \ell(C[L_s] + f_s^2) > \ell(C)/2$ and, symmetrically, $\ell(f_s^2) > \ell(C)/2$, which is impossible. Hence by Lemma~\ref{planar 3reg}, $t$ has three neighbors $s_1, s_2, s_3 \in T$ and we let $L_i := C[L_{s_i}]$ and $\ell_i := \ell(L_i)$. There are three edges $f_1, f_2, f_3 \in E(C) \setminus \bigcup E(L_i)$, where $f_1$ joins~$L_1$ and~$L_2$, $f_2$ joins~$L_2$ and~$L_3$ and~$f_3$ joins~$L_3$ and~$L_1$. Each~$L_i$ is a (possibly trivial) path whose endpoints we label $a_i, b_i$ so that, in some orientation, the cycle is given by
			\[
			C = a_1L_1b_1 + f_1 + a_2L_2b_2 + f_2 + a_3L_3b_3 + f_3 .
			\]
		Hence $f_1 = b_1a_2$, $f_2 = b_2a_3$ and $f_3 = b_3a_1$ (see Figure~\ref{trail Q}).
     	
     	The fact that $\ell_1 + \ell(f_1) \leq \ell(C)/2$ means that $L_1 + f_1$ is a shortest $a_1$-$a_2$-path in~$C$ and so $\dist_C(a_1, a_2) = \ell_1 + \ell(f_1)$. Similarly, we thus know the distance between all other pairs of vertices with just one segment~$L_i$ and one edge~$f_j$ between them. 
     	
     		\begin{figure}[htb]
	\begin{center}
		\includegraphics[width=4cm]{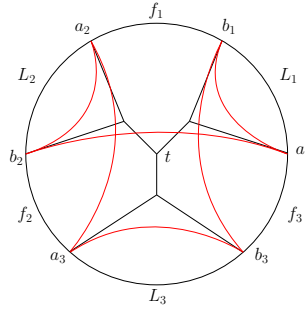}
	\end{center}
	\label{trail Q}
	\caption{The cycle~$Q$}
\end{figure}	

	If $|L_i| \leq 2$ for every $i \in [3]$, then $T \cup C$ is a subdivision of one the graphs depicted in Figure~\ref{five shortcut trees} and we are done. Hence from now on we assume that at least one~$L_i$ contains at least~3 vertices.

		\begin{lemma} \label{jumps}
			Suppose that $\max \{ |L_s| \colon s \in N \} \geq 3$. Then there is an $s \in N$ with $\ell( f_s^1 + C[L_s] + f_s^2) \leq \ell(C)/2$.
		\end{lemma}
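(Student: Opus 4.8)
The plan is to argue by contradiction, reducing everything to Lemma~\ref{consec cycle long}. Suppose the conclusion fails, so that $\ell(f_s^1 + C[L_s] + f_s^2) > \ell(C)/2$ for every $s \in N$. By hypothesis at least one arc has three or more vertices; cyclically relabelling the three branches at~$t$, I may assume $|L_1| \geq 3$, so that $L_1$ contains at least two edges of~$C$. The target is to show that the failure hypothesis forces $\ell_1 < \ell(f_2)$, where $f_2$ is the one connecting edge not incident with~$L_1$; this is absurd for an arc carrying two edges, and that contradiction proves the lemma.

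First I would feed the failure hypothesis through the two arcs $L_2$ and $L_3$ only. The arc $f_1 + L_2 + f_2$ and the complementary arc $a_3 L_3 b_3 + f_3 + a_1 L_1 b_1$ are the two arcs of~$C$ joining $a_3$ to $b_1$; they partition $E(C)$, so $\ell(f_1 + L_2 + f_2) > \ell(C)/2$ is equivalent to $\ell_1 + \ell_3 + \ell(f_3) < \ell(C)/2$. Symmetrically, $\ell(f_2 + L_3 + f_3) > \ell(C)/2$ is equivalent to $\ell_1 + \ell_2 + \ell(f_1) < \ell(C)/2$. Adding these two inequalities and substituting $\ell(C) = \ell_1 + \ell_2 + \ell_3 + \ell(f_1) + \ell(f_2) + \ell(f_3)$, every term cancels except to leave $\ell_1 < \ell(f_2)$.

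It then remains to contradict $\ell_1 < \ell(f_2)$. Since $|L_1| \geq 3$, the path $L_1$ contains two consecutive edges $e_1, e_2$ of~$C$, and Lemma~\ref{consec cycle long} applied with $e_0 := f_2$ yields $\ell(e_1) + \ell(e_2) > \ell(f_2)$. As $e_1$ and $e_2$ are among the edges of~$L_1$, this gives $\ell_1 \geq \ell(e_1) + \ell(e_2) > \ell(f_2)$, contradicting the previous paragraph. Hence at least one of the two inequalities must fail, i.e. $\ell(f_s^1 + C[L_s] + f_s^2) \leq \ell(C)/2$ for some $s \in \{s_2, s_3\} \subseteq N$. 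The one step I would check most carefully is the bookkeeping in the cancellation: one must combine precisely the conditions of the two arcs \emph{other than}~$L_1$, so that the surviving edge is exactly the edge $f_2$ opposite~$L_1$. The actual mathematical content is carried entirely by Lemma~\ref{consec cycle long}, which is where the shortcut-tree hypotheses have already been spent; everything else is arithmetic.
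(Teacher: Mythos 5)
Your proof is correct and is essentially the paper's argument: both rest on the identity $r_2 + r_3 = \ell(C) + \ell(f_2) - \ell_1$ together with Lemma~\ref{consec cycle long} forcing $\ell_1 > \ell(f_2)$. The paper simply states this directly (so $r_2 + r_3 < \ell(C)$ and the smaller of $r_2, r_3$ is below $\ell(C)/2$), whereas you run the same arithmetic as a proof by contradiction.
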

		
		
				\begin{proof}
	
	For $j \in [3]$, let $r_j := \ell( f_{s_j}^1 + L_j + f_{s_j}^2)$. Assume wlog that $|L_1| \geq 3$. Then~$L_1$ contains at least two consecutive edges, so by Lemma~\ref{consec cycle long} we must have $\ell_1 > \ell(f_2)$. Therefore
	\[
		r_2 + r_3 = \ell(C) + \ell(f_2) - \ell_1 < \ell(C) ,
	\]
	so the minimum of $r_2, r_3$ is less than $\ell(C)/2$.
		\end{proof}
		
		By the previous lemma, we may wlog assume that 
		\begin{equation} \label{jump edge}
			\ell(f_2) + \ell_3 + \ell(f_3) \leq \ell(C)/2 ,
		\end{equation}
		 so that $f_2 + L_3 + f_3$ is a shortest $a_1$-$b_2$-path in~$C$. Together with the inequalities from Lemma~\ref{good root}, this will lead to the final contradiction.
		
		Consider the cycle $Q = a_1b_2a_2a_3b_3b_1a_1$ (see Figure~\ref{trail Q}). Let~$W_T$ be a walk traced by~$Q$ in~$T$. Every edge of~$T$ is traversed at most twice, hence
     	
  	\begin{equation}
  		\sum_{ab \in E(Q)}  \dist_T(a, b) = \ell(W_T)  \leq 2\ell(T) . \label{sum through tree}
  	\end{equation}
  	  	
  	Let~$W_C$ be a walk traced by~$Q$ in~$C$. Using~(\ref{jump edge}) and the inequalities from Lemma~\ref{good root}, we see that
  	\begin{align*}
  	\ell(W_C) &= \sum_{ab \in E(Q)}  \dist_C(a, b) = 2 \ell_1 + 2 \ell_2 + 2 \ell_3 + 2 \ell(f_2) + 2 \ell(f_3) \\
		  	&= 2 \ell(C) - 2 \ell(f_1) .
  	\end{align*}
		  	
  	But by~\ref{sct minim} we have $\dist_C(a,b) \leq \dist_T(a,b)$ for all $a, b \in L(T)$ and therefore $\ell(W_C) \leq \ell(W_T)$. Then by~(\ref{sum through tree})
  	\[
  	2 \ell(C) - 2 \ell(f_1) = \ell(W_C) \leq \ell(W_T) \leq 2 \ell(T).
  	\] 
	But then $S := C - f_1$ is a connected subgraph of~$C$ with $L(T) \sub V(S)$ satisfying $\ell(S) \leq \ell(T)$. This contradicts~\ref{sct shorter} and finishes the proof of Theorem~\ref{shortcut tree cycle strong}. \hfill \qed

	\end{section}

	\begin{section}{Towards a general theory}
\label{general theory}	
	
	We have introduced a notion of higher geodecity based on the concept of the Steiner distance of a set of vertices. This introduces a hierarchy of properties: Every $k$-geodesic subgraph is, by definition, also $m$-geodesic for any $m < k$. This hierarchy is strict in the sense that for every~$k$ there are graphs~$G$ and $H \sub G$ and a length-function~$\ell$ on~$G$ such that~$H$ is $k$-geodesic in~$G$, but not $(k+1)$-geodesic. To see this, let~$G$ be a complete graph with $V(G) = [k+1] \cup \{ 0 \}$ and let~$H$ be the subgraph induced by $[k+1]$. Define $\ell(0j) := k-1$ and $\ell(ij) := k$ for all $i, j \in [k+1]$. If~$H$ was not $k$-geodesic, then~$G$ would contain a shortcut tree~$T$ for~$H$ with $|L(T)| \leq k$. Then~$T$ must be a star with center~$0$ and 
			\[
			\ell(T) = (k-1)|L(T)| \geq k(|L(T)|-1) .
			\]
	But any spanning tree of $H[L(T)]$ has length $k(|L(T)|-1) $ and so $\sd_H(L(T)) \leq \ell(T)$, contrary to~\ref{sct shorter}. Hence~$H$ is a $k$-geodesic subgraph of~$G$. However, the star~$S$ with center~$0$ and $L(S) = [k+1]$ shows that 
	\[
	\sd_G(V(H)) \leq (k+1)(k-1) < k^2 = \sd_H(V(H)) = k^2 .
	\]
			
			Theorem~\ref{tree 2-geo} and Theorem~\ref{cycle 6-geo} demonstrate a rather strange phenomenon by providing situations in which this hierarchy collapses. 
			
			For a given natural number $k \geq 2$, let us denote by~$\HH_k$ the class of all graphs~$H$ with the property that whenever~$G$ is a graph with $H \sub G$ and~$\ell$ is a length-function on~$G$ such that~$H$ is $k$-geodesic, then~$H$ is also fully geodesic. 
			
			By definition, this yields an ascending sequence $\HH_2 \sub \HH_3 \sub \ldots $ of classes of graphs. By Theorem~\ref{tree 2-geo} all trees lie in~$\HH_2$. By Theorem~\ref{cycle 6-geo} all cycles are contained in~$\HH_6$. The example above shows that $K_{k+1} \notin \HH_k$.
			
			We now describe some general properties of the class~$\HH_k$.
			
	\begin{theorem} \label{H_k minor closed}
		For every natural number $k \geq 2$, the class~$\HH_k$ is closed under taking minors.
\end{theorem}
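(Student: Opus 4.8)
The plan is to argue by contraposition and to reduce to the three elementary minor operations. Since every minor is obtained by a sequence of edge deletions, vertex deletions and edge contractions, and since deleting a vertex amounts to deleting all edges at it and then removing an isolated vertex, it suffices to show that $\HH_k$ is closed under (i) deleting an edge, (ii) deleting an isolated vertex, and (iii) contracting an edge. In each case I let $H'$ be obtained from $H$ by the operation and assume $H \in \HH_k$; unravelling the definition, I must show that if $(G', \ell')$ is such that $H'$ is $k$-geodesic but not fully geodesic in $G'$, then there is a pair $(G, \ell)$ with $H \subseteq G$ for which $H$ is $k$-geodesic but not fully geodesic, contradicting $H \in \HH_k$. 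Concretely, I would build $G$ from $G'$ by performing the \emph{inverse} operation and choosing the lengths of the new edges by hand.

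For (i), write $H = H' + e$ with $e = xy$, put $G := G' + e$, and give $e$ a length larger than $\ell'(G')$, so that $e$ is used in a Steiner tree only when connectivity forces it. The one point to check is a compatibility between the components of $H'$ and those of $G'$: since $H'$ is already $2$-geodesic in $G'$, two vertices lying in different components of $H'$ have infinite Steiner distance in $G'$ as well, so distinct components of $H'$ sit in distinct components of $G'$. Granting this, for every $A \subseteq V(H')$ with $|A| \le k$ the Steiner distances $\sd_H(A)$ and $\sd_G(A)$ reduce to the corresponding quantities in $H'$ and $G'$ (with one copy of the long edge added, symmetrically, exactly when $A$ straddles the two sides of $e$), whence $\sd_H(A) = \sd_G(A)$; and the non-geodesic witness for $H'$ survives verbatim because the long edge is never used for it. Operation (ii) is immediate: for $H$ to be $2$-geodesic an isolated vertex of $H$ must also be isolated in $G$, so it contributes nothing.

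The substantial case is (iii). Here $H$ is recovered from $H'$ by \emph{splitting} the contracted vertex $z$ into two vertices $x,y$ joined by $e$. I would build $G$ by splitting $z$ inside $G'$ in the same way: distribute the edges of $H'$ at $z$ to $x$ and $y$ exactly as they sit in $H$, send the remaining edges at $z$ to (say) $x$, and set $\ell(e) = \epsilon$ for a small $\epsilon > 0$. Contracting $e$ recovers $G'$, and a routine argument shows that for every set $A$ with contracted image $\bar A$ one has $\sd_G(A) = \sd_{G'}(\bar A) + \epsilon\,\gamma_G(A)$ with $\gamma_G(A) \in \{0,1\}$ recording whether $e$ is forced, and likewise $\sd_H(A) = \sd_{H'}(\bar A) + \epsilon\,\gamma_H(A)$. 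For $|A| \le k$ the base terms agree by $k$-geodecity, so everything hinges on matching the indicators $\gamma_G$ and $\gamma_H$. \textbf{This is the main obstacle}: the extra edges of $G'$ incident with $z$ might, after splitting, offer a cheap way to route around $e$ that $H$ cannot imitate, so that $\gamma_G(A) < \gamma_H(A)$ and $H$ fails to be $k$-geodesic in $G$.

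To remove this obstacle I would first normalise $(G', \ell')$ by a length perturbation: increase the length of every edge of $G'$ outside $H'$ by a common small amount $\delta > 0$. Because $\sd_{H'}(A) = \sd_{G'}(A)$ for all $A$ with $|A| \le k$, this leaves $H'$ $k$-geodesic and still not fully geodesic, but it has the decisive effect that every \emph{minimum} Steiner tree for a set of size at most $k$ now lies entirely inside $H'$. With this normalisation in force, any length-minimal Steiner tree in $G$ for a small set $A$ has base length $\sd_{G'}(\bar A)$, so its contraction is a minimum tree of $G'$ and therefore lies in $H'$; hence the tree itself lies in $H$, giving $\sd_G(A) = \sd_H(A)$ for all $|A| \le k$ and making $H$ $k$-geodesic in $G$. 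Finally, choosing $\epsilon$ below the geodesic gap of the witness keeps $H$ from being fully geodesic. I expect the perturbation step, namely verifying that it preserves both $k$-geodecity and the failure of full geodecity while collapsing all small minimum Steiner trees into $H'$, to be the delicate part that makes the contraction case go through.
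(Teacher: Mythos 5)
Your overall plan --- reducing to the three elementary operations and converting a host $(G',\ell')$ witnessing $H'\notin\HH_k$ into a host $(G,\ell)$ witnessing $H\notin\HH_k$ --- is legitimate, and cases (i) and (ii) go through. The gap is in case (iii), and it sits exactly where you placed your confidence: the claim that $\sd_G(A)=\sd_{G'}(\bar A)+\epsilon\,\gamma_G(A)$ with $\gamma_G(A)\in\{0,1\}$, and its sequel that ``any length-minimal Steiner tree in $G$ for a small set $A$ has base length $\sd_{G'}(\bar A)$'', are false under the constraints you actually impose on $\epsilon$ (small, and below the geodesic gap of the witness). The $\delta$-normalisation controls where the minimum Steiner trees \emph{of $G'$} live, but it says nothing about subgraphs of $G'$ that exceed the minimum by less than $\epsilon$, and these are what break the argument. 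Concretely, take $k=2$, $H=C_4$ with vertices $x,y,u,v$ and edges $xy,yu,uv,vx$, $e=xy$, so that $H'=H/e$ is a triangle on $z,u,v$. Let $G'$ be $H'$ plus one vertex $0$ joined to $z,u,v$, with triangle edges of length $2$ and, after your perturbation, edges at $0$ of length $1+\delta$. Then $H'$ is $2$-geodesic in $G'$, not fully geodesic (the star at $0$ has length $3+3\delta<4$), and all minimum Steiner trees for pairs lie in $H'$. Your split sends $zu$ to $yu$, $zv$ to $xv$, the extra edge $0z$ to $0x$, and adds $e=xy$ of length $\epsilon$. Now take $\delta=10^{-2}$ and $\epsilon=10^{-1}$: every condition you stated holds ($\epsilon$ is far below the witness gap $1-3\delta$), yet $\dist_G(x,u)=2+2\delta<2+\epsilon=\dist_H(x,u)$, because $G$ routes from $x$ to $u$ through $0$ while $H$ must pay for $e$. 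So $H$ is not even $2$-geodesic in $G$: the minimum Steiner tree $x0u$ avoids $e$, its contraction $z0u$ is \emph{not} a minimum tree of $G'$ (its length $2+2\delta$ lies strictly between $\sd_{G'}(\{z,u\})=2$ and $2+\epsilon$), and your contradiction never materialises.

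The repair is to couple $\epsilon$ to the length spectrum of $(G',\ell')$ rather than to the witness alone: choose $\epsilon$ smaller than the least positive difference between lengths of any two subgraphs of $G'$ (in the example, $\epsilon<2\delta$). Then a two-case argument closes the proof. If a minimum Steiner tree $S$ for $A$ in $G$ avoids $e$, its projection to $G'$ is connected, contains $\bar A$, and has length $\sd_G(A)\le \sd_H(A)\le \sd_{H'}(\bar A)+\epsilon=\sd_{G'}(\bar A)+\epsilon$; by the spectrum condition its length is exactly $\sd_{G'}(\bar A)$, so it is a minimum Steiner tree, lies in $H'$ by your normalisation, and lifts into $H$, giving $\sd_H(A)\le\ell(S)=\sd_G(A)$. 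If instead $e\in S$, then contracting $e$ gives $\ell(S)\ge\epsilon+\sd_{G'}(\bar A\cup\{z\})\ge\epsilon+\sd_{G'}(\bar A)\ge \sd_H(A)$, and equality follows from $\sd_G(A)\le\sd_H(A)$. Note that this second case deliberately avoids comparing $\sd_{G'}(\bar A\cup\{z\})$ with $\sd_{H'}(\bar A\cup\{z\})$: that set can have $k+1$ elements, so the ``indicator matching'' you envisaged is not available there, only the monotonicity $\sd_{G'}(\bar A\cup\{z\})\ge\sd_{G'}(\bar A)$ is. Even once repaired, your route is genuinely different from the paper's, which never modifies an arbitrary host: it characterises $\HH_k$ by the maximum leaf-number of its shortcut trees (Proposition~\ref{H_k sct}) and transfers a shortcut tree across a full minor model in one step (Lemma~\ref{sct minor closed}), building the new host from scratch with cheap branch-set edges and prohibitively long edges elsewhere, which is what makes the geodecity bookkeeping trivial there.
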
				

	To prove this, we first provide an easier characterization of the class~$\HH_k$.
			
			\begin{proposition} \label{H_k sct}
				Let $k \geq 2$ be a natural number and~$H$ a graph. Then $H \in \HH_k$ if and only if every shortcut tree for~$H$ has at most~$k$ leaves.
			\end{proposition}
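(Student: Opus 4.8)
The plan is to prove both implications in contrapositive form, with Lemma~\ref{shortcut tree} as the engine and the following remark (already recorded after the definition) as the dictionary between the two sides: if $T$ is a shortcut tree for~$H$ and $T \sub G$, then $\sd_G(L(T)) \le \ell(T) < \sd_H(L(T))$ by~\ref{sct shorter}, so $H$ fails to be $|L(T)|$-geodesic in~$G$. In other words, a shortcut tree is exactly a certificate that~$H$ is not $|L(T)|$-geodesic, and the whole statement reduces to matching the number of leaves of such certificates against the index~$k$.

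For the direction ``every shortcut tree for~$H$ has at most~$k$ leaves $\Rightarrow H \in \HH_k$'', I would argue contrapositively. Suppose $H \notin \HH_k$, witnessed by $G \supseteq H$ and a length-function~$\ell$ for which~$H$ is $k$-geodesic but not fully geodesic. Pick~$m$ with~$H$ not $m$-geodesic and apply Lemma~\ref{shortcut tree} to obtain a shortcut tree $T \sub G$ for~$H$ with at most~$m$ leaves. Restricting~$\ell$ to $T \cup H$ makes~$T$ a shortcut tree in the abstract sense, so by hypothesis $|L(T)| \le k$. But then the remark above shows~$H$ is not $|L(T)|$-geodesic in~$G$ with $|L(T)| \le k$, contradicting that~$H$ is $k$-geodesic. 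Hence no such~$G$ exists and $H \in \HH_k$.

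The converse is the substantive direction. Given a shortcut tree~$T$ for~$H$ with $|L(T)| = n > k$, the natural candidate witness is $G := T \cup H$. By the remark,~$H$ is not $n$-geodesic in~$G$, hence not fully geodesic, so it remains only to show that~$H$ is $k$-geodesic in~$G$; I would in fact show it is $(n-1)$-geodesic. This is the step I expect to be the main obstacle: a priori $G = T \cup H$ could contain some entirely different, smaller shortcut tree built from a mixture of~$T$-edges and~$H$-edges, and the mere existence of~$T$ says nothing about such competitors. The key observation that dissolves the difficulty is that~\ref{sct edge} forbids exactly this mixing: if~$H$ were not $(n-1)$-geodesic in~$G$, Lemma~\ref{shortcut tree} would produce a shortcut tree $S \sub G$ for~$H$ with $|L(S)| \le n-1$, and since~$S$ satisfies~\ref{sct edge} it uses no edge of~$H$, so $E(S) \sub E(T)$ and~$S$ is a subtree of~$T$ with $L(S) \sub L(T)$. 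As $|L(S)| < n = |L(T)|$ we get $L(S) \subsetneq L(T)$, so applying~\ref{sct minim} \emph{for~$T$} gives $\sd_H(L(S)) \le \sd_T(L(S)) \le \ell(S)$, which contradicts~\ref{sct shorter} for~$S$, namely $\ell(S) < \sd_H(L(S))$.

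Thus the crux is that condition~\ref{sct edge} confines every competing shortcut tree inside $T \cup H$ to the tree~$T$ itself, where condition~\ref{sct minim} of~$T$ rules it out immediately; any smaller shortcut trees that~$H$ may possess in other ambient graphs are irrelevant, since they need not embed into this particular~$G$. Concluding, $H$ is $(n-1)$-geodesic and hence $k$-geodesic in~$G$ while failing to be fully geodesic, so $H \notin \HH_k$, which completes the contrapositive and the proposition.
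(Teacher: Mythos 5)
Your proof is correct and follows essentially the same route as the paper: both directions use Lemma~\ref{shortcut tree}, the witness graph $T \cup H$, and the key observation that conditions~\ref{sct vert} and~\ref{sct edge} confine any competing shortcut tree $S \sub T \cup H$ to be a subtree of~$T$ with $L(S) \sub L(T)$, which~\ref{sct minim} then rules out unless $L(S) = L(T)$. The only differences are cosmetic (you argue the first direction in contrapositive form, and you use $\sd_T(L(S)) \le \ell(S)$ rather than identifying~$S$ as the Steiner tree of $L(S)$ in~$T$), so there is nothing to change.
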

				
				\begin{proof}
					Suppose first that $H \in \HH_k$ and let~$T$ be a shortcut tree for~$H$. By~\ref{sct shorter}, $H$ is not $|L(T)|$-geodesic in $T \cup H$. Let~$m$ be the minimum integer such that~$H$ is not $m$-geodesic in $T \cup H$. By Lemma~\ref{shortcut tree}, $T \cup H$ contains a shortcut tree~$S$ with at most~$m$ leaves for~$H$. But then by~\ref{sct vert} and~\ref{sct edge}, $S$ is the Steiner tree in~$T$ of $B := L(S) \sub L(T)$. If $B \subsetneq L(T)$, then $\ell(S) = \sd_T(B) \geq \sd_H(B)$ by~\ref{sct minim}, so we must have $B = L(T)$ and $m \geq |L(T)|$. Thus~$H$ is $(|L(T)| - 1)$-geodesic in $T \cup H$, but not $|L(T)|$-geodesic. As $H \in \HH_k$, it must be that $|L(T)| - 1 < k$.
					
					Suppose now that every shortcut tree for~$H$ has at most~$k$ leaves and let $H \sub G$ $k$-geodesic with respect to some length-function $\ell : E(G) \to \RR^+$. If~$H$ was not fully geodesic, then~$G$ contained a shortcut tree~$T$ for~$H$. By assumption, $T$ has at most~$k$ leaves. But then $\sd_G( L(T)) \leq \ell(T) < \sd_H(L(T))$, so~$H$ is not $k$-geodesic in~$G$.
				\end{proof}
				
	\begin{lemma} \label{wlog connected}
		Let $k \geq 2$ be a natural number and~$G$ a graph. Then $G \in \HH_k$ if and only if every component of~$G$ is in~$\HH_k$.
	\end{lemma}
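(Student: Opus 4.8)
The plan is to argue entirely through the shortcut-tree characterisation of Proposition~\ref{H_k sct}: $G \in \HH_k$ if and only if every shortcut tree for~$G$ has at most~$k$ leaves. The whole lemma then reduces to transferring this leaf bound between~$G$ and its components, and the bridge I would set up is a correspondence between shortcut trees for a component~$G_i$ and those shortcut trees for~$G$ whose leaf-set happens to lie in~$V(G_i)$.

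First I would establish this correspondence, which rests on the observation that for any $B \sub V(G_i)$ one has $\sd_G(B) = \sd_{G_i}(B)$, since every connected subgraph of~$G$ meeting only~$V(G_i)$ stays inside the component~$G_i$. Given a shortcut tree~$T$ for~$G$ with $L(T) \sub V(G_i)$, conditions~\ref{sct vert} and~\ref{sct edge} restrict immediately to~$G_i$, and the identity above carries~\ref{sct shorter} and~\ref{sct minim} over as well, so~$T$ is a shortcut tree for~$G_i$. Conversely, starting from a shortcut tree~$T$ for~$G_i$, I would relabel its internal vertices to avoid~$V(G)$ and extend~$\ell$ to $E(G)$ by arbitrary positive values; the same identity then shows~$T$ is a shortcut tree for~$G$ with $L(T) \sub V(G_i)$. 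Both verifications are routine.

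The genuinely delicate step—and the main obstacle—is that a shortcut tree for~$G$ could a priori join leaves in different components, in which case $\sd_G(L(T)) = \infty$ makes~\ref{sct shorter} vacuous and threatens to produce shortcut trees with many leaves that correspond to no single component. The crux is therefore to show that any shortcut tree~$T$ for~$G$ with $|L(T)| \geq 3$ has all its leaves in one component. Assuming not, I would pick a proper subset $B \subsetneq L(T)$ that still meets two components: if some component contains two leaves, delete one of them; otherwise each component contains exactly one leaf, and $|L(T)| \geq 3$ lets me delete any single leaf. For such~$B$ no connected subgraph of~$G$ contains~$B$, so $\sd_G(B) = \infty > \sd_T(B)$, contradicting~\ref{sct minim}. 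Note this is exactly where the bound $|L(T)| \geq 3$ is needed: a two-leaf path joining distinct components is a legitimate shortcut tree, but with only two leaves it never exceeds the threshold $k \geq 2$.

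With these ingredients the lemma follows quickly. For the ``only if'' direction, any shortcut tree for a component~$G_i$ extends to a shortcut tree for~$G$, which has at most~$k$ leaves because $G \in \HH_k$, so $G_i \in \HH_k$. For the ``if'' direction, let~$T$ be a shortcut tree for~$G$; if $|L(T)| \leq 2$ then $|L(T)| \leq k$ since $k \geq 2$, and otherwise the crux forces all leaves into a single component~$G_i$, so~$T$ restricts to a shortcut tree for~$G_i$ and $G_i \in \HH_k$ yields $|L(T)| \leq k$. In either case every shortcut tree for~$G$ has at most~$k$ leaves, so $G \in \HH_k$ by Proposition~\ref{H_k sct}.
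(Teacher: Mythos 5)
Your proof is correct and takes essentially the same route as the paper: both reduce the lemma to the shortcut-tree characterization of Proposition~\ref{H_k sct}, transfer shortcut trees between~$G$ and its components, and use~\ref{sct minim} to show that a shortcut tree whose leaves meet two distinct components can have only two leaves. Your ``crux'' step is simply a more detailed write-up of the paper's one-line appeal to~\ref{sct minim}, and your explicit relabelling of internal vertices when extending a component's shortcut tree to~$G$ is a minor point the paper glosses over.
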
			
	
	\begin{proof}
		Every shortcut tree for a component~$K$ of~$G$ becomes a shortcut tree for~$G$ by taking $\ell(e) := 1$ for all $e \in E(G) \setminus E(K)$. Hence if $G \in \HH_k$, then every component of~$G$ is in~$\HH_k$ as well. 
		
		Suppose now that every component of~$G$ is in~$\HH_k$ and that~$T$ is a shortcut tree for~$G$. If there is a component~$K$ of~$G$ with $L(T) \sub V(K)$, then~$T$ is a shortcut tree for~$K$ and so $|L(T)| \leq k$ by assumption. Otherwise, let $t_1 \in L(T) \cap V(K_1)$ and $t_2 \in L(T) \cap V(K_2)$ for distinct components $K_1, K_2$ of~$G$. By~\ref{sct minim}, it must be that $L(T) = \{ t_1, t_2 \}$ and so $|L(T)| = 2 \leq k$.
	\end{proof}

				\begin{lemma} \label{sct minor closed}
					Let~$G, H$ be two graphs and let~$T$ be a shortcut tree for~$G$. If~$G$ is a minor of~$H$, then there is a shortcut tree~$T'$ for~$H$ which is isomorphic to~$T$.
				\end{lemma}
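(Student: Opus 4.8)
The plan is to lift the shortcut tree $T$ for $G$ to an isomorphic copy $T'$ inside $H$ by replacing each leaf of $T$ with a representative vertex of the corresponding branch-set, and then to engineer a length-function on $H$ under which the representatives inherit the Steiner distances of $G$. Fix a model of $G$ in $H$, consisting of branch-sets $(B_v)_{v \in V(G)}$ and an injection $\beta \colon E(G) \to E(H)$. For each $v$ I choose a spanning tree $S_v$ of $H[B_v]$ and a root $r_v \in B_v$; for a leaf $x \in L(T)$ I single out $h_x := r_x$ as the representative of $x$. Let $T'$ be obtained from $T$ by keeping its internal vertices as fresh vertices outside $V(H)$ and renaming each leaf $x$ to $h_x$, with the edge-lengths inherited from $T$. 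Then $T' \cong T$, its internal vertices and edges are disjoint from $H$, and $V(T') \cap V(H) = \{h_x : x \in L(T)\} = L(T')$, so \ref{sct vert} and \ref{sct edge} hold and $\ell(T') = \ell(T)$.

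It remains to define $\ell$ on $E(H)$ and verify \ref{sct shorter} and \ref{sct minim}. For $e = uv \in E(G)$ let $p_{e,u} \in B_u$ and $p_{e,v} \in B_v$ be the endpoints of $\beta(e)$, and put $\delta_{e,v} := \dist_{S_v}(p_{e,v}, r_v)$. I give every edge lying in some $S_v$ a tiny length $\epsilon > 0$, I set $\ell(\beta(e)) := \ell(e) - \delta_{e,u} - \delta_{e,v}$, and I give every remaining edge of $H$ a huge length $M$ (larger than $\ell(T)$ and than $\sd_G(L(T))$). For $\epsilon$ small the numbers $\delta_{e,v}$ are tiny, so all these lengths are positive; $\epsilon$ is fixed only at the very end. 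The role of the correction terms $\delta_{e,v}$ is to let us cross each branch-set \emph{for free}: the cost of passing through $B_v$ from an attaching edge to the root cancels exactly against the reduction in $\ell(\beta(e))$.

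The main step is the upper bound $\sd_H(B') \leq \sd_G(B)$ for every $B \sub L(T)$, where $B' := \{h_x : x \in B\}$, and this is where the correction terms earn their keep. Given a Steiner tree $S$ for $B$ in $G$, I realise it in $H$ by taking $\beta(e)$ for each $e \in E(S)$ together with, inside each $B_v$ with $v \in V(S)$, the union of the $S_v$-paths from $r_v$ to every attaching point $p_{e,v}$ (and to $h_v$ itself when $v \in B$, which costs nothing since $r_v = h_v$). This subgraph is connected and contains $B'$, and the internal cost at $v$ is at most $\sum_{e \in E(S),\, e \ni v} \delta_{e,v}$; hence each $\delta_{e,v}$ is added back exactly once and cancels the deduction in $\ell(\beta(e))$, so the total telescopes to $\sum_{e \in E(S)} \ell(e) = \sd_G(B)$. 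Therefore $\sd_H(B') \leq \sd_G(B) \leq \sd_T(B) = \sd_{T'}(B')$ for every proper $B$, which is \ref{sct minim}.

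For \ref{sct shorter} I need the matching lower bound for the full set. Any connected $S' \sub H$ containing $L(T')$ that avoids the $M$-edges uses only $S_v$-edges and $\beta$-edges; contracting the branch-sets it meets turns it into a connected subgraph of $G$ containing $L(T)$, whose edges already carry total uncorrected length at least $\sd_G(L(T))$. Discarding the nonnegative internal contribution and adding back the corrections gives $\ell(S') \geq \sd_G(L(T)) - \Delta$, where $\Delta := \sum_{e \in E(G)}(\delta_{e,u} + \delta_{e,v}) = O(\epsilon)$; a subgraph using an $M$-edge is longer still. Since $\sd_G(L(T)) > \ell(T)$ by \ref{sct shorter} for $G$, fixing $\epsilon$ small enough that $\Delta < \sd_G(L(T)) - \ell(T)$ yields $\sd_H(L(T')) > \ell(T) = \ell(T')$. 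The one genuine obstacle is the tightness of \ref{sct minim}: if $\sd_G(B) = \sd_T(B)$ for some proper $B$, then the cruder construction with tiny internal edges and \emph{uncorrected} $\beta$-edges overshoots $\sd_T(B)$ by a positive amount and fails; the correction terms $\delta_{e,v}$ are precisely what make the upper bound exact rather than merely approximate.
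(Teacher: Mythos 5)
Your proof is correct, and its skeleton coincides with the paper's: lift $T$ through the model by choosing one representative vertex per leaf branch-set and fresh internal vertices, give edges inside branch-sets a tiny length, give the $\beta$-images (roughly) the lengths of their preimages, make every other edge of $H$ prohibitively long, and then verify \ref{sct shorter} by contracting a candidate subgraph of $H$ to a subgraph of $G$, and \ref{sct minim} by lifting a Steiner tree of $G$ into $H$. The genuine difference lies in where the $\epsilon$-error is absorbed, and here your device differs from the paper's. The paper first perturbs $\ell$ upward on $E(T)$ so that all inequalities in \ref{sct minim} become strict with a uniform gap $\epsilon$, keeps the $\beta$-edges at their exact original lengths, and charges the branch-set edges (total cost at most $\epsilon$) against that slack; with unaltered $\beta$-edge lengths, the lower bound \ref{sct shorter} is then immediate, since discarding branch-set edges only shortens the contracted subgraph. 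You instead leave $T$ untouched and subtract the exact crossing costs $\delta_{e,u}+\delta_{e,v}$ from $\ell(\beta(e))$, so that lifting a Steiner tree telescopes and \ref{sct minim} holds \emph{exactly} --- which is necessary in your setup, since without the paper's perturbation there may be no slack at all in \ref{sct minim}; the price is that your contraction argument for \ref{sct shorter} loses $\Delta = O(\epsilon)$, which you absorb into the strict gap $\sd_G(L(T)) - \ell(T) > 0$, together with the (minor but needed, and duly made) check that the corrected lengths $\ell(\beta(e))$ remain positive. Both mechanisms are sound: the paper's perturbation buys a clean lower bound at the cost of touching $T$, while your correction terms buy an exact upper bound at the cost of a slightly more delicate lower bound.
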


				\begin{proof}
				Since~$G$ is a minor of~$H$, there is a family of disjoint connected sets $B_v \sub V(H)$, $v \in V(G)$, and an injective map $\beta : E(G) \to E(H)$ such that for $uv \in E(G)$, the end vertices of $\beta(uv) \in E(H)$ lie in~$B_u$ and~$B_v$.

	Let~$T$ be a shortcut tree for~$G$ with $\ell : E(T \cup G) \to \RR^+$. By adding a small positive real number to every $\ell(e)$, $e \in E(T)$, we may assume that the inequalities in~\ref{sct minim} are strict, that is
	\[
		\sd_G(B) \leq \sd_T(B) - \epsilon
	\]
	for every $B \sub L(T)$ with $2 \leq |B| < |L(T)|$, where $\epsilon > 0$ is some constant.
	
	Obtain the tree~$T'$ from~$T$ by replacing every $t \in L(T)$ by an arbitrary $x_t \in B_t$ and every $t \in V(T) \setminus L(T)$ by a new vertex~$x_t$ not contained in $V(H)$, maintaining the adjacencies. It is clear by definition that $V(T') \cap V(H) = L(T')$ and $E(T') \cap E(H) = \emptyset$. We now define a length-function $\ell' : E(T' \cup H) \to \RR^+$ as follows.
	
	For every edge $st \in E(T)$, the corresponding edge $x_sx_t \in E(T')$ receives the same length $\ell'(x_sx_t) := \ell(st)$. Every $e \in E(H)$ that is contained in one of the branchsets~$B_v$ is assigned the length $\ell '(e) := \delta$, where $\delta := \epsilon / |E(H)| $. For every $e \in E(G)$ we let $\ell '( \beta(e)) := \ell(e)$. To all other edges of~$H$ we assign the length~$\ell(T) + 1$.
	
	We now show that~$T'$ is a shortcut tree for~$H$ with the given length-function~$\ell'$. Suppose that $S' \sub H$ was a connected subgraph with $L(T') \sub V(S')$ and $\ell'(S') \leq \ell'(T')$. By our choice of~$\ell'$, every edge of~$S'$ must either lie in a branchset~$B_v$ or be the image under~$\beta$ of some edge of~$G$, since otherwise $\ell'(S') > \ell(T) = \ell'(T')$. Let $S \sub V(G)$ be the subgraph where $v \in V(S)$ if and only if $V(S') \cap B_v$ is non-empty and $e \in E(S)$ if and only if $\beta(e) \in E(S')$. Since~$S'$ is connected, so is~$S$: For any non-trivial bipartition $V(S) = U \cup W$ the graph~$S'$ contains an edge between $\bigcup_{u \in U} B_u$ and $\bigcup_{w \in W} B_w$, which in turn yields an edge of~$S$ between~$U$ and~$W$. Moreover $L(T) \sub V(S)$, since~$V(S')$ contains~$x_t$ and thus meets~$B_t$ for every $t \in L(T)$. Finally, $\ell(S) \leq \ell(S')$, which contradicts our assumption that~$T$ is a shortcut tree for~$G$.
	
	For $B' \sub L(T')$ with $2 \leq |B'| < |L(T')|$, let $B := \{ t \in T \colon x_t \in B' \}$. By assumption, there is a connected $S \sub G$ with $B \sub V(S)$ and $\ell(S) \leq \sd_T(B) - \epsilon$. Let
	\[
	S' := \bigcup_{v \in V(S)} H[B_v] + \{ \beta(e) \colon  e \in E(S) \}.
	\]
	For every $x_t \in B'$ we have $t \in B \sub V(S)$ and so $x_t \in B_t \sub V(S')$. Since~$S$ is connected and every $H[B_v]$ is connected, $S'$ is connected as well. Moreover
	\[
	\ell'(S') \leq \delta |E(H)| + \ell(S) \leq \sd_T(B) = \sd_{T'}(B') .
	\]
				\end{proof}
	
	\begin{proof}[Proof of Theorem~\ref{H_k minor closed}]
		Let~$H$ be a graph in~$\HH_k$ and~$G$ a minor of~$H$. Let~$T$ be a shortcut tree for~$G$. By Lemma~\ref{sct minor closed}, $H$ has a shortcut tree~$T'$ which is isomorphic to~$T$. By Proposition~\ref{H_k sct} and assumption on~$H$, $T$ has $|L(T')| \leq k$ leaves. Since~$T$ was arbitrary, it follows from Proposition~\ref{H_k sct} that $G \in \HH_k$.
\end{proof}		
	
	\begin{corollary} \label{H_2 forests}
		$\HH_2$ is the class of forests.
	\end{corollary}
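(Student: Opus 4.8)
The plan is to prove the two inclusions separately, and the easy one is that every forest belongs to~$\HH_2$. A forest is a disjoint union of trees, so by Theorem~\ref{tree 2-geo} each of its components lies in~$\HH_2$, and Lemma~\ref{wlog connected} then yields that the forest itself is in~$\HH_2$. This direction needs no further work.

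For the reverse inclusion I would argue by contraposition, exploiting that~$\HH_2$ is closed under taking minors (Theorem~\ref{H_k minor closed}). The key is to exhibit a single cycle that is \emph{not} in~$\HH_2$ and that occurs as a minor of every non-forest. The natural candidate is the triangle~$C_3$. To certify $C_3 \notin \HH_2$ it suffices, by the very definition of~$\HH_2$, to produce a supergraph in which~$C_3$ is $2$-geodesic but not fully geodesic. Concretely, I would take $G = K_4$ on $\{u,v,w,c\}$ with $C_3 = uvw$, assign length~$3$ to each edge of the triangle and length~$\tfrac32$ to each of the edges $cu, cv, cw$. Then every pair of triangle vertices is at distance~$3$ both in~$C_3$ and in~$K_4$, so~$C_3$ is $2$-geodesic; but the star centred at~$c$ joins $\{u,v,w\}$ with total length $\tfrac92 < 6$, while the cheapest connected subgraph of~$C_3$ spanning $\{u,v,w\}$ has length~$6$. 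Hence $\sd_{K_4}(\{u,v,w\}) < \sd_{C_3}(\{u,v,w\})$ and~$C_3$ is not fully geodesic. This is exactly the configuration $T \cup C_3 = K_4$ of Figure~\ref{five shortcut trees}; alternatively, the same non-membership may be read off Theorem~\ref{shortcut tree cycle strong} through Proposition~\ref{H_k sct}, since this gives a shortcut tree for~$C_3$ with three leaves.

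It then remains to observe that a graph which is not a forest contains a cycle, and contracting all but three edges of that cycle exhibits~$C_3$ as a minor. Consequently, if some $H \in \HH_2$ were not a forest, then minor-closedness (Theorem~\ref{H_k minor closed}) would force $C_3 \in \HH_2$, contradicting the previous paragraph. Therefore every graph in~$\HH_2$ is a forest, which together with the first inclusion gives the claimed characterization.

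The substantive step is the converse inclusion, and within it the main obstacle is pinning down the correct minor-minimal obstruction. The triangle is the right object precisely because every cycle of length at least three contracts onto it, so that the single fact $C_3 \notin \HH_2$ propagates, via Theorem~\ref{H_k minor closed}, to all non-forests at once. Once this obstruction is identified and its non-geodecity verified, the remaining arguments are routine bookkeeping.
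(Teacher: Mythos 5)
Your proposal is correct and follows essentially the same route as the paper: forests are handled by Theorem~\ref{tree 2-geo} together with Lemma~\ref{wlog connected}, and the converse reduces every non-forest to the triangle via minor-closedness, with $C_3 \notin \HH_2$ certified by the $K_4$ configuration (the paper cites its Section~\ref{sct on cycles} length-function and invokes Lemma~\ref{sct minor closed} with Proposition~\ref{H_k sct}, which is exactly what Theorem~\ref{H_k minor closed} packages). Your explicit edge lengths on $K_4$ check out and merely make concrete what the paper leaves to a figure.
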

	
	\begin{proof}
			By Theorem~\ref{tree 2-geo} and Lemma~\ref{wlog connected}, every forest is in~$\HH_2$. On the other hand, if~$G$ contains a cycle, then it contains the triangle~$C_3$ as a minor. We saw in Section~\ref{sct on cycles} that~$C_3$ has a shortcut tree with~3 leaves. By Lemma~\ref{sct minor closed}, so does~$G$ and hence $G \notin \HH_2$ by Proposition~\ref{H_k sct}.
	\end{proof}
	
	\begin{corollary} \label{num leaf bounded}
		For every natural number $k \geq 2$ there exists an integer $m = m(k)$ such that every graph that is not in~$\HH_k$ has a shortcut tree with more than~$k$, but not more than~$m$ leaves.
	\end{corollary}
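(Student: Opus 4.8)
The lower bound here is immediate: if $G \notin \HH_k$, then by Proposition~\ref{H_k sct} some shortcut tree for~$G$ has more than~$k$ leaves. The entire content of the statement thus lies in the \emph{uniform} upper bound~$m(k)$, and the plan is to extract it from the minor-closedness of~$\HH_k$ established in Theorem~\ref{H_k minor closed}.

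The key idea is that, being closed under taking minors, the class~$\HH_k$ is characterized by a set of forbidden minors. By the Graph Minor Theorem of Robertson and Seymour, this set can be taken to be \emph{finite}: there are minor-minimal graphs $F_1, \ldots, F_r \notin \HH_k$ such that a graph lies in~$\HH_k$ if and only if it contains none of the~$F_i$ as a minor. This finiteness is the engine of the whole argument; everything else is bookkeeping with the tools already developed.

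Next I would fix, for each $i \in [r]$, a shortcut tree~$T_i$ for~$F_i$. Such a tree exists by Proposition~\ref{H_k sct} because $F_i \notin \HH_k$, and it has more than~$k$ leaves. The crucial finiteness observation is that every shortcut tree for a fixed finite graph~$F$ has all of its leaves among the vertices of~$F$ by~\ref{sct vert}, so $|L(T_i)| \leq |V(F_i)|$. I would therefore set $m(k) := \max_{i \in [r]} |V(F_i)|$, which is finite since~$\{F_1, \ldots, F_r\}$ is.

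Finally, given an arbitrary $G \notin \HH_k$, it contains some~$F_i$ as a minor. Applying Lemma~\ref{sct minor closed} with $F_i$ in the role of the minor and~$G$ as the host graph, the shortcut tree~$T_i$ for~$F_i$ lifts to a shortcut tree~$T'$ for~$G$ isomorphic to~$T_i$. Then $|L(T')| = |L(T_i)|$ is strictly larger than~$k$ and at most $|V(F_i)| \leq m(k)$, exactly as required. The one real obstacle is that this argument is wholly non-constructive: it leans on the full strength of the Graph Minor Theorem and yields no explicit value for~$m(k)$, nor any handle on the obstruction set. Extracting an effective bound on~$m(k)$ would demand a genuinely different, constructive argument and is not something this approach can deliver.
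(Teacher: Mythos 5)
Your proposal is correct and follows essentially the same route as the paper: both use Theorem~\ref{H_k minor closed} together with the Graph Minor Theorem to get a finite obstruction set for~$\HH_k$, define $m(k)$ as the maximum order of an obstruction, bound the number of leaves of a shortcut tree for an obstruction via Proposition~\ref{H_k sct} and~\ref{sct vert}, and lift that tree to the given graph by Lemma~\ref{sct minor closed}. The paper even shares your closing reservation, noting afterwards that a more elementary proof with an explicit bound on~$m(k)$ would be desirable.
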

	
	\begin{proof}
		Let $k \geq 2$ be a natural number. By Theorem~\ref{H_k minor closed} and the Graph Minor Theorem of Robertson and Seymour~\cite{graphminorthm} there is a finite set~$R$ of graphs such that for every graph~$H$ we have $H \in \HH_k$ if and only if~$H$ does not contain any graph in~$R$ as a minor. Let $m(k) := \max_{G \in R} |G|$.
		
		Let~$H$ be a graph and suppose $H \notin \HH_k$. Then~$H$ contains some $G \in R$ as a minor. By Proposition~\ref{H_k sct}, this graph~$G$ has a shortcut tree~$T$ with more than~$k$, but certainly at most~$|G|$ leaves. By Lemma~\ref{sct minor closed}, $H$ has a shortcut tree isomorphic to~$T$.
	\end{proof}
		
	We remark that we do not need the full strength of the Graph Minor Theorem here: We will see in a moment that the tree-width of graphs in~$\HH_k$ is bounded for every $k \geq 2$, so a simpler version of the Graph Minor Theorem can be applied, see~\cite{excludeplanar}. Still, it seems that Corollary~\ref{num leaf bounded} ought to have a more elementary proof.
		
	\begin{question}
		Give a direct proof of Corollary~\ref{num leaf bounded} that yields an explicit bound on~$m(k)$. What is the smallest possible value for~$m(k)$?
	\end{question}		
	
	In fact, we are not even aware of any example that shows one cannot simply take $m(k) = k+1$.
		
		Given that~$\HH_2$ is the class of forests, it seems tempting to think of each class~$\HH_k$ as a class of ``tree-like'' graphs. In fact, containment in~$\HH_k$ is related to the tree-width of the graph, but the relation is only one-way.

		\begin{proposition} \label{low tw example}
			For any integer $k \geq 1$, the graph $K_{2, 2k}$ is not in~$\HH_{2k-1}$.
		\end{proposition}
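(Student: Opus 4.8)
The plan is to invoke Proposition~\ref{H_k sct}: since $H \in \HH_{2k-1}$ iff every shortcut tree for $H$ has at most $2k-1$ leaves, it suffices to build one shortcut tree for $H := K_{2,2k}$ with exactly $2k$ leaves (the case $k=1$ is degenerate, as $K_{2,2}$ is a $4$-cycle admitting a trivial $2$-leaf shortcut, so I assume $k\ge 2$). Write $u,v$ for the small side and $w_1,\dots,w_{2k}$ for the large side, and split the latter into $A=\{w_1,\dots,w_k\}$ and $B=\{w_{k+1},\dots,w_{2k}\}$. Fix a parameter $M>1$ and make each vertex of $A$ cheap at $u$ and dear at $v$, and conversely for $B$: set $\ell(uw)=1,\ \ell(vw)=M$ for $w\in A$, and $\ell(uw)=M,\ \ell(vw)=1$ for $w\in B$. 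For the tree I take a \emph{double star} $T$: a new centre $c_A$ joined to every vertex of $A$ and a new centre $c_B$ joined to every vertex of $B$, all $2k$ pendant edges having length $1$, together with a bridge $c_Ac_B$ of length $L$. Then $L(T)=A\cup B$ has size $2k$ and \ref{sct vert}, \ref{sct edge} hold by construction, so only \ref{sct shorter} and \ref{sct minim} remain, and these will pin down $M$ and $L$.

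Next I would record the Steiner distances in $H$. A Steiner tree for a large-side set either hangs everything off a single hub or uses both hubs joined by one bridge through some $w$; a routine case analysis shows that for a set meeting $A$ in $m_1$ and $B$ in $m_2$ vertices,
\[
\sd_H = \min\bigl(m_1+m_2M,\ m_1M+m_2,\ m_1+m_2+M\bigr),
\]
the last (mixed) term being $m_1\cdot 1 + m_2\cdot 1 + M$ with a single reused bridge edge, while a set contained in one cluster has $\sd_H$ equal to its size. In $T$ a cluster-internal set of size $m$ has $\sd_T=m$, and any set meeting both clusters has $\sd_T=m_1+m_2+L$. Hence \ref{sct minim} is automatic for cluster-internal sets ($m\le m$), and for a crossing set it reads $L\ge \sd_H-(m_1+m_2)$. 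Writing
\[
g(m_1,m_2):=\sd_H-(m_1+m_2)=\min\bigl((M-1)\min(m_1,m_2),\ M\bigr),
\]
condition \ref{sct minim} becomes $L\ge \max g$ over all proper crossing sets, while \ref{sct shorter} becomes $2k+L<\sd_H(A\cup B)$.

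The crux is choosing $M$ in the right regime. If $M$ is large, then already the crossing set $(m_1,m_2)=(2,2)$ has $g=M$, forcing $L\ge M$; but the full set also uses the mixed tree and forces $L<M$, leaving no room --- this is exactly why a naive large-$M$ choice fails once $k\ge 3$. The remedy is to keep $M$ moderate, taking
\[
\tfrac{k}{k-1}\ \le\ M\ <\ \tfrac{k-1}{k-2}
\]
(the upper bound being vacuous for $k=2$). The lower bound gives $2k+M\le k(1+M)$, so the full set's cheapest Steiner tree is the mixed one, $\sd_H(A\cup B)=2k+M$, and \ref{sct shorter} becomes $L<M$. The upper bound gives $(k-1)(M-1)<M$, i.e.\ $k-1<\tfrac{M}{M-1}$, so that \emph{no} proper crossing set reaches the cap $g=M$ (every proper crossing set has $\min(m_1,m_2)\le k-1$); thus $\max g=(k-1)(M-1)<M$, and any $L$ with $(k-1)(M-1)\le L<M$ makes $T$ a shortcut tree. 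Since $\tfrac{k}{k-1}<\tfrac{k-1}{k-2}$, the interval for $M$ is nonempty.

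The only genuine obstacle is this ordering of thresholds: one must check that pushing the full set onto the expensive mixed tree ($M\ge\frac{k}{k-1}$) is compatible with keeping every proper crossing set on a cheaper single-hub tree ($M<\frac{k-1}{k-2}$), so that $\max_{\text{proper}} g$ stays strictly below $M$ and a valid bridge length $L$ survives. Everything else --- the displayed formula for $\sd_H$, the identity for $g$, and the fact that $\max g$ is attained at $\min(m_1,m_2)=k-1$ --- is a routine verification, and the resulting $2k$-leaf shortcut tree yields $K_{2,2k}\notin\HH_{2k-1}$.
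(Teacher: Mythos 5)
Your proposal is correct and takes essentially the same approach as the paper: the same double-star-plus-bridge shortcut tree with $2k$ leaves, the same cheap/expensive split of the edges of $K_{2,2k}$ into two clusters, and the same reduction via Proposition~\ref{H_k sct}. In fact, the paper's explicit length function is (after rescaling by $k-1$) precisely the boundary choice $M = k/(k-1)$ and $L = (k-1)(M-1) = 1$ of your parametrized family, so your argument merely exhibits the full range of parameters for which the same construction works.
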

		
		\begin{proof}
			Let~$H$ be a complete bipartite graph $V(H) = A \cup B \cup \{ x, y \}$ with $|A| = |B| = k$, where $uv \in E(H)$ if and only if $u \in A \cup B$ and $v \in \{ x, y \}$ (or vice versa). We construct a shortcut tree for $H \cong K_{2,2k}$ with~$2k$ leaves. 
			
			For $x', y' \notin V(H)$, let~$T$ be the tree with $V(T) = A \cup B \cup \{ x', y' \}$, where~$x'$ is adjacent to every $a \in A$, $y'$ is adjacent to every $b \in B$ and $x'y' \in E(T)$. It is clear that $V(T) \cap V(H) = L(T)$ and~$T$ and~$H$ are edge-disjoint. We now define a length-function $\ell : E(T \cup H) \to \RR^+$ that turns~$T$ into a shortcut tree for~$H$.
			
			For all $a \in A$ and all $b \in B$, let
			
			\begin{gather*}
				\ell ( a x) = \ell (a x' ) = \ell ( b y) = \ell (b y') = k-1, \\
				\ell ( a y ) = \ell ( a y') = \ell ( b x ) = \ell ( b x' ) = k, \\
				\ell ( x' y' ) = k - 1 .
			\end{gather*}
			
			Let $A' \sub A, B' \sub B$. We determine $\sd_H(A' \cup B')$. By symmetry, it suffices to consider the case where $|A'| \geq |B'|$. We claim that
			\[
				\sd_H( A' \cup B') = (k-1)|A' \cup B'| + |B'| .
			\]
			It is easy to see that $\sd_H( A' \cup B') \leq (k-1)|A'| + k|B'|$, since $S^* := H[A' \cup B' \cup \{ x \}]$ is connected and achieves this length. Let now $S \sub H$ be a tree with $A' \cup B' \sub V(S)$. 
			
			If every vertex in $A' \cup B'$ is a leaf of~$S$, then~$S$ can only contain one of~$x$ and~$y$, since it does not contain a path from~$x$ to~$y$. But then~$S$ must contain one of $H[A' \cup B' \cup \{x \}]$ and $H[ A' \cup B' \cup \{ y \} ]$, so $\ell(S) \geq \ell(S^*)$.
			
			Suppose now that some $x \in A' \cup B'$ is not a leaf of~$S$. Then~$x$ has two incident edges, one of length~$k$ and one of length~$k-1$. For $s \in S$, let $r(s)$ be the sum of the lengths of all edges of~$S$ incident with~$s$. Then $\ell(s) \geq k-1$ for all $s \in A' \cup B'$ and $\ell(x) \geq 2k-1$. Since $A' \cup B'$ is independent in~$H$ (and thus in~$S$), it follows that
			\begin{align*}
			\ell(S) &\geq \sum_{s \in A' \cup B'} r(s) \geq |( A' \cup B') \setminus \{ x \}| (k-1) + (2k-1) \\
			&= |A' \cup B'|(k-1) + k \geq (k-1)|A' \cup B'| + |B'| .	
			\end{align*}
	Thus our claim is proven. For $A', B'$ as before, it is easy to see that
	\[
		\sd_T( A' \cup B') = \begin{cases}
			(k-1)|A' \cup B'| , &\text{ if } B' = \emptyset \\
			(k-1)|A' \cup B'| + k- 1, &\text{ otherwise.}
		\end{cases}
	\]			
	We thus have $\sd_T( A' \cup B' ) < \sd_H(A' \cup B')$ if and only if $|A'| = |B'| = k$. Hence~\ref{sct shorter} and~\ref{sct minim} are satisfied and~$T$ is a shortcut tree for~$H$ with~$2k$ leaves.			
			
		\end{proof}

		Note that the graph $K_{2,k}$ is planar and has tree-width~2. Hence there is no integer~$m$ such that all graphs of tree-width at most~2 are in~$\HH_m$. Using Theorem~\ref{H_k minor closed}, we can turn Proposition~\ref{low tw example} into a positive result, however.
		
		\begin{corollary} \label{H_k exclude K_2,k+2}
			For any $k \geq 2$, no $G \in \HH_k$ contains $K_{2, k+2}$ as a minor.			\hfill \qed
		\end{corollary}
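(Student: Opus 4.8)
The plan is to reduce everything to the single claim that $K_{2,k+2}$ itself fails to lie in~$\HH_k$, and then let Theorem~\ref{H_k minor closed} do the rest. Indeed, suppose we have shown $K_{2,k+2}\notin\HH_k$. If some $G\in\HH_k$ contained $K_{2,k+2}$ as a minor, then, since $\HH_k$ is closed under taking minors, $K_{2,k+2}$ would itself belong to~$\HH_k$ --- a contradiction. Thus no $G\in\HH_k$ has $K_{2,k+2}$ as a minor, which is exactly the assertion of the corollary.

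It remains to establish $K_{2,k+2}\notin\HH_k$, and here I would feed the right index into Proposition~\ref{low tw example}. That proposition only speaks about complete bipartite graphs $K_{2,2m}$ with an even number of vertices on the large side, so the first step is to pick the even integer~$2m$ lying in $\{k+1,k+2\}$ (one of these two consecutive integers is always even). By Proposition~\ref{low tw example} we have $K_{2,2m}\notin\HH_{2m-1}$. Since $2m\ge k+1$ gives $2m-1\ge k$, the ascending chain $\HH_2\subseteq\HH_3\subseteq\cdots$ yields $\HH_k\subseteq\HH_{2m-1}$, and therefore $K_{2,2m}\notin\HH_k$. Finally, $2m\le k+2$ means that $K_{2,2m}$ is obtained from $K_{2,k+2}$ by deleting $k+2-2m$ of the vertices on the large side, so $K_{2,2m}$ is a minor (indeed a subgraph) of $K_{2,k+2}$; invoking minor-closedness once more, $K_{2,k+2}\notin\HH_k$, as desired.

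There is essentially no obstacle here: the entire content of the corollary is already packaged in Proposition~\ref{low tw example} and Theorem~\ref{H_k minor closed}, and the only point requiring care is the parity bookkeeping. When $k$ is even one simply takes $2m=k+2$ and reads off the conclusion directly; when $k$ is odd one takes $2m=k+1$ and passes to the proper minor $K_{2,k+1}$ of $K_{2,k+2}$. Either way the two cited results combine immediately to give the claim.
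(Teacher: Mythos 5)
Your proposal is correct and is exactly the argument the paper intends: the corollary is stated with a bare \qed precisely because it follows by combining Proposition~\ref{low tw example} with the minor-closedness of $\HH_k$ from Theorem~\ref{H_k minor closed}, together with the ascending chain $\HH_2 \subseteq \HH_3 \subseteq \cdots$. Your parity bookkeeping (taking $2m = k+2$ for even $k$ and $2m = k+1$, a minor of $K_{2,k+2}$, for odd $k$) correctly fills in the details the paper leaves implicit, and in particular explains why the statement uses $k+2$ rather than $k+1$.
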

	
	In particular, it follows from the Grid-Minor Theorem~\cite{excludeplanar} and planarity of~$K_{2,k}$ that the tree-width of graphs in~$\HH_k$ is bounded. Bodlaender et al~\cite{excludeK2k} gave a more precise bound for this special case, showing that graphs excluding $K_{2,k}$ as a minor have tree-width at most~$2(k-1)$.
	
%
%
		
%
		
		It seems plausible that a qualitative converse to Corollary~\ref{H_k exclude K_2,k+2} might hold.
			
			\begin{question}			\label{K_2,k as culprit}
	Is there a function $q : \NN \to \NN$ such that every graph that does not contain $K_{2,k}$ as a minor is contained in~$\HH_{q(k)}$?			
			\end{question}
%
		
	Since no subdivision of a graph~$G$ contains $K_{2, |G| + e(G) + 1}$ as a minor, a positive answer would prove the following.
		
		\begin{conjecture}
			For every graph~$G$ there exists an integer~$m$ such that every subdivision of~$G$ lies in~$\HH_m$.
		\end{conjecture}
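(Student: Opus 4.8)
The plan is to deduce the statement from the rigid topological structure of subdivisions. First I record the cheap reduction already implicit in the preceding remark: every subdivision of~$G$ excludes $K_{2,N}$ as a minor for $N := |G| + e(G) + 1$, so a positive answer to Question~\ref{K_2,k as culprit} would immediately yield the conjecture with $m := q(N)$. I will instead aim for a self-contained argument that bounds, uniformly over all subdivisions~$H$ of~$G$ and all length-functions, the number of leaves of any shortcut tree for~$H$; by Proposition~\ref{H_k sct} such a bound $m = m(G)$ is exactly what is needed.

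So fix~$G$ and let~$H$ be an arbitrary subdivision, with branch vertices $B \subseteq V(H)$ (the $|G|$ images of the vertices of~$G$) and subdivided paths~$P_e$, one for each $e \in E(G)$. Let~$T$ be a shortcut tree for~$H$ with leaf set $L := L(T)$. I assign to each leaf a \emph{type}: a branch vertex is its own type, and every other leaf receives the type $e \in E(G)$ of the unique path~$P_e$ on which it lies. There are at most $|G| + e(G)$ types, so it suffices to prove that only boundedly many leaves of~$T$ can lie on a single subdivided path~$P_e$; this gives $|L| \leq |G| + c(G)\,e(G)$ for some constant $c(G)$.

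The core step is thus a one-path estimate. Morally, along a single path~$P_e$ the metric of~$H$ is one-dimensional, and one expects the collapse behind Theorem~\ref{tree 2-geo} to reappear: if the leaves $x_1, \dots, x_r$ on~$P_e$ occur in this order and each subpath $P_e[x_i, x_{i+1}]$ is isometrically embedded in~$H$, then~$P_e$ itself already connects these leaves at least as cheaply as~$T$ does. Running the toolbox outline --- choose a cycle through~$L$, trace walks in~$H$ and in~$T$, and exploit the multiplicity bound $\mult \leq 2$ --- one then compares $\ell(T)$ with a connection lying inside~$H$ and contradicts~\ref{sct shorter} once $r$ is large. The main obstacle, and the reason the statement remains a conjecture, is the non-isometric case: a long subpath of~$P_e$ may be short-circuited by a detour running through the other paths and branch vertices of~$H$, and this is precisely the mechanism that places $K_{2,k}$ outside~$\HH_m$. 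Because~$G$ is fixed, there are only boundedly many combinatorial types of such detour, and the crux is to show that a shortcut tree cannot parlay them into unboundedly many leaves on one path. Establishing this bound \emph{uniformly} in the subdivision and in the length-function --- that is, controlling the interaction between one long path and the bounded-complexity remainder of~$H$ --- is the step I expect to be genuinely hard.
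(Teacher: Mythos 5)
You have not proved the statement, and neither does the paper: it appears there as an open conjecture, recorded only as a consequence of the (also open) problem of whether excluding a $K_{2,k}$-minor forces membership in some $\HH_{q(k)}$ --- the same reduction you restate in your first paragraph. Your proposed self-contained route contains an explicit gap at its decisive step. Via Proposition~\ref{H_k sct} it does suffice to bound the number of leaves of an arbitrary shortcut tree $T$ for an arbitrary subdivision $H$ of $G$, and your typing of leaves by the subdivided paths $P_e$ correctly reduces this to showing that only boundedly many leaves can lie on a single $P_e$. But that one-path bound is exactly what you do not prove. In the ``isometric'' case you appeal to the toolbox outline, yet even there the required bound $\mult_{W_T}(e)\le 2$ is not free: the paper obtains it only from planarity of $T\cup C$ (Lemma~\ref{euler planar}) or for one specially constructed cycle (Lemma~\ref{equality achieved}), and turning that outline into an actual contradiction even in the special case of a cycle required all of Lemmas~\ref{cover disjoint trees}--\ref{jumps}. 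In the non-isometric case --- a stretch of $P_e$ short-circuited through the rest of $H$ --- you say only that the step is ``genuinely hard''; that concession is the open problem itself, not a step toward its solution.

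There is also a structural flaw in the framing of the one-path estimate. The shortcut-tree axioms constrain the whole leaf set $L(T)$: condition~\ref{sct minim} gives cheap connections in $H$ only for \emph{proper} subsets of $L(T)$, and condition~\ref{sct shorter} compares $\ell(T)$ with $\sd_H(L(T))$, not with the cost of connecting the leaves on one path. The subtree of $T$ spanned by the leaves on a single $P_e$ need not itself be a shortcut tree for anything, so no argument confined to $P_e$ can contradict~\ref{sct shorter}; any proof must control how the leaves on $P_e$ interact with the remaining leaves through the rest of $H$. This is precisely the interaction exploited in Proposition~\ref{low tw example}: viewing $K_{2,2k}$ as a subdivision of a multigraph with $2k$ parallel edges, every subdivided path carries just one leaf of the shortcut tree, yet the tree has $2k$ leaves; only the boundedness of $e(G)$ for fixed $G$ blocks this construction for subdivisions of a fixed graph. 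In short, your proposal identifies the right reduction and correctly locates the difficulty, but it establishes nothing beyond what the paper already records, and the statement remains a conjecture.
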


	\end{section}

     	     	\begin{section}{Generating the cycle space}
	Let~$G$ be a graph with length-function~$\ell$. It is a well-known fact (see e.g.~\cite[Chapter~1, exercise~37]{diestelbook}) that the set of 2-geodesic cycles generates the cycle space of~$G$. This extends as follows, showing that fully geodesic cycles abound.
		
     	\begin{proposition} \label{generate cycle space}
     		Let~$G$ be a graph with length-function~$\ell$. The set of fully geodesic cycles generates the cycle space of~$G$.
     	\end{proposition}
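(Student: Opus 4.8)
The plan is to prove the equivalent statement that \emph{every} cycle of~$G$ lies in the subspace of the cycle space spanned by the fully geodesic cycles; since the cycle space is generated by all cycles, this suffices. I would argue by choosing a counterexample cycle~$C$ of minimum length~$\ell(C)$, which exists since~$G$ is finite and has only finitely many cycles. Such a~$C$ cannot itself be fully geodesic (otherwise it would be a sum of a single fully geodesic cycle), so there is a set $A \sub V(C)$ with $\sd_G(A) < \sd_C(A)$, and necessarily $|A| \geq 2$. The vertices of~$A$ split~$C$ into arcs $P_1, \dots, P_m$ joining cyclically consecutive vertices of~$A$. Since a connected subgraph of a cycle containing~$A$ is exactly~$C$ with one of these gaps deleted, we have $\sd_C(A) = \ell(C) - \ell(P_{\max})$, where $P_{\max}$ is a longest arc; this is the precise quantitative form of the failure of geodecity that I will exploit.

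The heart of the argument is to rewrite~$C$ as a sum of strictly shorter closed walks built from a Steiner tree. Let $S \sub G$ be a Steiner tree for~$A$, so that $\ell(S) = \sd_G(A)$ and $L(S) \sub A$, and for each~$i$ let $S_i$ be the unique $a_i$--$a_{i+1}$ path inside the tree~$S$. Put $C_i := P_i + S_i$ (symmetric difference of edge sets); as both $P_i$ and $S_i$ are $a_i$--$a_{i+1}$ paths, each~$C_i$ is an even subgraph. The walk $a_1 \to a_2 \to \dots \to a_m \to a_1$ running along the~$S_i$ is a walk traced by the cyclic sequence on~$A$ in the tree~$S$, so by Lemma~\ref{pos even} together with~(\ref{multiplicities traced walk tree}) every edge of~$S$ is traversed an even number of times, whence $\sum_i S_i = 0$ in the cycle space. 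Consequently $\sum_i C_i = \sum_i P_i + \sum_i S_i = C$. For the lengths, $S_i$ is a subpath of the tree~$S$, so $\ell(S_i) \leq \ell(S)$, and therefore
\[
\ell(C_i) \leq \ell(P_i) + \ell(S_i) \leq \ell(P_i) + \ell(S) < \ell(P_i) + \bigl( \ell(C) - \ell(P_{\max}) \bigr) \leq \ell(C),
\]
the final inequality holding because $\ell(P_i) \leq \ell(P_{\max})$.

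To conclude, each~$C_i$, being an even subgraph, is an edge-disjoint union of cycles, each of length at most $\ell(C_i) < \ell(C)$; by minimality of~$C$ each of these shorter cycles is a sum of fully geodesic cycles, hence so is every~$C_i$, and hence so is $C = \sum_i C_i$, contradicting the choice of~$C$. Note that this gives a self-contained proof not relying on the cited fact that $2$-geodesic cycles generate, and in particular reproves it. I expect the only genuine subtleties to be the two bookkeeping points on which the induction turns: establishing the identity $\sd_C(A) = \ell(C) - \ell(P_{\max})$, and the key length estimate above. The main obstacle is precisely controlling the lengths of the pieces so that induction applies, and this hinges on the simple but crucial observation that a tree distance $\ell(S_i)$ can never exceed the total length $\ell(S)$ of the whole Steiner tree; once this is in place, the even-multiplicity fact from Lemma~\ref{pos even} closes the decomposition $C = \sum_i C_i$ and the rest is immediate.
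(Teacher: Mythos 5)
Your proof is correct, and its engine is the same as the paper's: write~$C$ as a sum (over~$\ZZ_2$) of pieces, each consisting of an arc of~$C$ together with a tree path joining that arc's endpoints, use Lemma~\ref{pos even} with~(\ref{multiplicities traced walk tree}) to see that the tree paths cancel, bound each piece strictly below~$\ell(C)$, and induct on length --- your minimal counterexample plays exactly the role of the paper's auxiliary class~$\D$ of cycles that are not 2-sums of shorter cycles. Where you genuinely diverge is in which tree you use. The paper first invokes Lemma~\ref{shortcut tree} to obtain a shortcut tree~$T$ for~$C$; properties~\ref{sct vert} and~\ref{sct edge} then make each piece $aTb + P_{ab}$ an honest cycle, and~\ref{sct shorter} supplies the strict length inequality. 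You instead take an arbitrary witness $A \sub V(C)$ of the failure of full geodecity and an arbitrary Steiner tree~$S$ for~$A$ in~$G$, extracting the strict inequality directly from $\sd_G(A) < \sd_C(A) = \ell(C) - \ell(P_{\max})$; both of the ``bookkeeping points'' you flagged (this identity, and the estimate $\ell(C_i) \leq \ell(P_i) + \ell(S) < \ell(C)$) do hold. The price of skipping the shortcut tree is that~$S$ may meet~$C$, so your pieces $C_i = P_i + S_i$ are only even subgraphs rather than cycles --- but you pay it correctly by splitting each~$C_i$ into edge-disjoint cycles of still smaller length before applying minimality. The dividend is that your argument needs only Lemma~\ref{pos even}, not Lemma~\ref{shortcut tree} (the paper's remark advertises needing both), so it is somewhat more self-contained; the paper's version, in exchange, gets cleaner pieces and reuses machinery already built for its main theorems.
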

		
		We remark, first of all, that the proof is elementary and does not rely on Theorem~\ref{cycle 6-geo}, but only requires Lemma~\ref{shortcut tree} and Lemma~\ref{pos even}.
		
		Let~$\D$ be the set of all cycles of~$G$ which cannot be written as a 2-sum of cycles of smaller length. The following is well-known.
		
		\begin{lemma}			
			The cycle space of~$G$ is generated by~$\D$.
		\end{lemma}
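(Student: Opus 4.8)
The plan is to reduce the statement to a short induction on length. First recall the standard fact that the cycle space of~$G$ is spanned by the set of all cycles of~$G$: over~$\mathbb{F}_2$ every element of the cycle space is an edge-disjoint union of cycles and hence lies in their span. It therefore suffices to prove that every single cycle~$C$ of~$G$ belongs to the span~$\langle \D \rangle$.

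I would establish this by strong induction on the length~$\ell(C)$. Since~$G$ is finite, only finitely many values of~$\ell(C)$ occur, so the induction is well-founded. If $C \in \D$ there is nothing to show. Otherwise, by the very definition of~$\D$, the cycle~$C$ can be written as a 2-sum of two cycles~$C_1, C_2$ with $\ell(C_1), \ell(C_2) < \ell(C)$. The key point is that such a 2-sum coincides with addition in the cycle space: the edge (or common path) shared by~$C_1$ and~$C_2$ is exactly the part that cancels in the symmetric difference, so that $C = C_1 \triangle C_2 = C_1 + C_2$ as elements of the cycle space. By the induction hypothesis both~$C_1$ and~$C_2$ lie in~$\langle \D \rangle$, and hence so does their sum~$C$. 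The base case is automatic: a cycle of minimal length cannot be written as a 2-sum of strictly shorter cycles, so it belongs to~$\D$.

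The only point requiring care is the identification of the 2-sum with the cycle-space sum, that is, the equation $C = C_1 + C_2$. This is immediate once one observes that the edges in which~$C_1$ and~$C_2$ agree are precisely those removed in forming their 2-sum, so that they cancel under symmetric difference. After that the induction closes with no further computation, and I do not expect any substantial obstacle — which is consistent with the statement being well-known and the proof being entirely elementary.
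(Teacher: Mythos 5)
Your induction on $\ell(C)$ is structurally the same argument as the paper's, which is phrased as a minimal counterexample (a cycle of minimum length not expressible as a sum of members of~$\D$); so far, so good. The genuine problem lies in your reading of the definition of~$\D$. In this paper, ``2-sum'' does not mean the gluing of \emph{two} cycles along a common edge or path; it means the sum over~$\mathbb{F}_2$ (symmetric difference of edge sets) of an arbitrary finite family of cycles. This is unambiguous from the paper's own usage: non-membership in~$\D$ is unpacked as $C = C_1 \oplus \cdots \oplus C_k$ with $\ell(C_i) < \ell(C)$ for all~$i$ and no bound on~$k$, and in the proof of Proposition~\ref{generate cycle space} a non-fully-geodesic cycle is decomposed as $\bigoplus_{ab \in E(D)} C_{ab}$ --- possibly many summands --- and this alone is used to conclude $C \notin \D$.

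Consequently your step ``by the very definition of~$\D$, $C = C_1 \triangle C_2$ for two strictly shorter cycles'' is not justified: the definition only yields a decomposition into some number $k \geq 2$ of strictly shorter cycles, and these need not admit a regrouping into two shorter cycles without further argument. What your proof establishes, as written, is that the cycle space is generated by the \emph{larger} set of cycles that are not symmetric differences of two shorter cycles; that statement is weaker than the lemma, and it would not suffice for the paper's application, precisely because Proposition~\ref{generate cycle space} produces decompositions with more than two summands. The repair is immediate: when $C \notin \D$, apply your induction hypothesis to each of the summands $C_1, \ldots, C_k$, all strictly shorter than~$C$, and conclude $C \in \langle \D \rangle$. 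With that correction your ``only point requiring care'' disappears entirely --- the decomposition is by definition an identity in the cycle space, so there is no cancellation along a shared path to verify --- and the proof coincides with the paper's.
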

		
		\begin{proof}
			It suffices to show that every cycle is a 2-sum of cycles in~$\D$. Assume this was not the case and let $C \sub G$ be a cycle of minimum length that is not a 2-sum of cycles in~$\D$. In particular, $C \notin \D$ and so there are cycles $C_1, \ldots, C_k$ with $C = C_1 \oplus \ldots \oplus C_k$ and $\ell(C_i) < \ell(C)$ for every $i \in [k]$. By our choice of~$C$, every~$C_i$ can be written as a 2-sum of cycles in~$\D$. But then the same is true for~$C$, which is a contradiction.
		\end{proof}
	
	\begin{proof}[Proof of Proposition~\ref{generate cycle space}]
		We show that every $C \in \D$ is fully geodesic. Indeed, let $C \sub G$ be a cycle which is not fully geodesic and let $T\sub G$ be a shortcut tree for~$C$. There is a cycle~$D$ with $V(D) = L(T)$ such that~$C$ is a union of edge-disjoint $L(T)$-paths $P_{ab}$ joining~$a$ and~$b$ for $ab \in E(D)$.
		
		For $ab \in E(D)$ let $C_{ab} := aTb + P_{ab}$. Every edge of~$C$ lies in precisely one of these cycles. An edge $e \in E(T)$ lies in $C_{ab}$ if and only if $e \in aTb$. By Lemma~\ref{pos even} and~(\ref{multiplicities traced walk tree}), every $e \in E(T)$ lies in an even number of cycles~$C_{ab}$. Therefore $C = \bigoplus_{ab \in E(D)} C_{ab}$.
		
		For every $ab \in E(D)$, $C$ contains a path~$S$ with $E(S) = E(C) \setminus E(P_{ab})$ with $L(T) \sub V(S)$. Since~$T$ is a shortcut tree for~$C$, it follows from~\ref{sct shorter} that
		\[
		\ell(C_{ab}) \leq \ell(T) + \ell(P_{ab}) < \ell(S) + \ell(P_{ab}) = \ell(C) .
		\]
		In particular, $C \notin \D$.
	\end{proof}
	
	The fact that 2-geodesic cycles generate the cycle space has been extended to the topological cycle space of locally finite graphs graphs by Georgakopoulos and Spr\"{u}ssel~\cite{agelos}. Does Proposition~\ref{generate cycle space} have a similar extension?

	\end{section}

 		\newpage 
	\bibliographystyle{plain}
\bibliography{steinerbib}

%
%
%

     \end{document}